\newtheorem{theorem}{Theorem}[section]
\newtheorem{corollary}[theorem]{Corollary}
\newtheorem{lemma}[theorem]{Lemma}
\newtheorem{proposition}[theorem]{Proposition}
\theoremstyle{definition}
\newtheorem{example}[theorem]{Example}
\newtheorem{remark}[theorem]{Remark}
\newtheorem{nonsec}[equation]{}
\numberwithin{equation}{section}
\newcommand{{\tth}}{\mathrm{th}}
\newcommand{\K}{\mathcal{K}}
\font\fFt=eusm10 
\font\fFa=eusm7  
\font\fFp=eusm5  
\def\K{\mathchoice
{\hbox{\,\fFt K}}
{\hbox{\,\fFt K}}
{\hbox{\,\fFa K}}
{\hbox{\,\fFp K}}}
\title[Landen transformations]{Landen transformations applied to approximation}
\author[R. Kargar]{Rahim Kargar}
\address[R. Kargar (Corresponding Author)]{Department of Mathematics and Statistics, University of Turku, Turku, Finland}
\email{{\tt rakarg@utu.fi}}
\author[O. Rainio]{Oona Rainio}
\address[O. Rainio]{Department of Mathematics and Statistics, University of Turku, Turku, Finland}
\email{\tt ormrai@utu.fi}
\author[M. Vuorinen]{Matti Vuorinen}
\address[M. Vuorinen]{Department of Mathematics and Statistics, University of Turku, Turku, Finland}
\email{\tt vuorinen@utu.fi}
\keywords{Landen transformation; Elliptic integral; Quasiconformal mappings; Quasiconformal Schwarz lemma; Gr\"{o}tzsch capacity.}
\subjclass[2010]{Primary: 30C60; Secondary: 33C75}
\begin{document}

\begin{abstract}
We study computational methods for the approximation of special
functions recurrent in geometric function theory and quasiconformal mapping
theory. The functions studied can be expressed as quotients of complete
elliptic integrals and as inverses of such quotients. In particular, we
consider the distortion function $\varphi_K(r)$ which gives a majorant
for $|f(x)|$ when $f: \mathbb{B}^2 \to \mathbb{B}^2, f(0)=0,$ is a
quasiconformal mapping of the unit disk $\mathbb{B}^2.$ It turns out that
the approximation method is very simple: five steps of Landen iteration is
enough to achieve machine precision.
\end{abstract}

\maketitle
\hspace{2.4cm} {In memoriam: Academician Yu. G. Reshetnyak 1929-2021}

\medskip


\section{Introduction}

The {\it Landen transformation }
\begin{equation*}
 r \mapsto \frac{2 \sqrt{r}}{1+r},\quad r \in (0,1),
\end{equation*}
and the {\it Landen sequences} of functions, recursively defined in terms of
this transformation, are closely related to elliptic integrals and elliptic
functions. For instance, the complete integrals $\K(r)$ satisfy functional
identities involving the Landen transformation, and these integrals can be
expressed as infinite products, where the factor functions are expressed
in terms of the Landen sequences.

Our goal here is to show that some of the well-known special
functions of geometric function theory can be efficiently
computed using a few steps of the Landen iteration.
These functions include the function $\mu(r)$ related to the
conformal modulus of the Gr\"{o}tzsch ring domain, defined as a quotient
of complete elliptic integrals,
and
\begin{equation*}
  \varphi_K:[0,1]\rightarrow [0,1];\quad \varphi_K(r)=\mu^{-1}\left(\mu(r)/K\right),\quad K>0,
\end{equation*}
the special function in the quasiconformal Schwarz lemma. The paper \cite{avz}
is a survey of these special functions. However, this survey is incomplete because
of the very extensive work of the authors of \cite{qmc} after the publication of
\cite{avz}.

In recent years, many papers have been published with information
about the asymptotic behavior of the function $\K(r)$ at the
singularity $r=1$, see Ref. \cite[p. 53]{avv}. We give here a
recursive scheme for the numerical approximation for the several functions we
consider. In five iteration steps, we obtain approximations with errors close to machine epsilon. The code is just a few lines and can be implemented in every programming language --- no programming libraries are needed. For instance, the first iteration for the function $ \varphi_K$ improves the classical majorant function $4^{1-1/K} r^{1/K}$ for
$ \varphi_K(r), K > 1.$

The above functions have been extensively studied in the monograph \cite{avv} and
the associated software for computation is given on the attached diskette.
We use this
software as a reference when we study the precision of our algorithms.
The function $ \varphi_K(r)$ for integer values of $K$ also occurs in the study
of so-called modular equations \cite[p. 92]{avv}, \cite{aqvv, as}.
Previously, the Landen iteration applied to $ \varphi_K(r)$ has been
studied in \cite[p. 93]{avv} and in Partyka's paper \cite{par}.

The structure of this article is as follows: In Section \ref{sec2} we define the ascending
and descending Landen sequences and investigate their application to the
aforementioned approximation problems. In Section \ref{sec3} we analyze the algorithms in detail and study their numerical performance.

\section{Ascending and Descending Landen Sequences}\label{sec2}
In this section, we review the Landen transformation and its applications to compute elliptic integrals and related special functions. These facts will be applied to quasiconformal mappings in the next section.
\begin{nonsec}{\bf Landen sequences.}\label{Landen sec}
For $r\in (0,1)$ let $L(r,0)=r$ and
\begin{equation}\label{Landen}
  L(r,p+1)=\frac{2\sqrt{L(r,p)}}{1+L(r,p)};\quad L(r,-p-1)=\left(\frac{L(r,-p)}{1+\sqrt{1-L(r,-p)^2}}\right)^2,
\end{equation}
for $p=0,1,2,3,\ldots$. The recursively defined sequences
$\{L(r,p)\}$ and $\{L(r,-p)\}$ are called {\it ascending} and
{\it descending Landen sequences}, respectively. It is clear that
each of  the Landen functions $L(\cdot,p):(0,1)\rightarrow (0,1)$
is an increasing homeomorphism with
\begin{equation*}
  L(r,p)<L(r,p+1) \quad {\rm and} \quad L(r,p+q)=L(L(r,p),q)
\end{equation*}
for all $r\in(0,1)$ and $p,q\in\mathbb{Z}:=\{0,\pm1,\pm2,\pm3,\ldots\}$.
In particular,
\begin{equation*}
  y=L(r,p)\Leftrightarrow r=L(y,-p).
\end{equation*}
Therefore, $L(\cdot,-p)$ is the inverse of $L(\cdot,p)$.
\end{nonsec}
Throughout the paper, we use the following abbreviations
\begin{equation*}
r' =\sqrt{1-r^2},\quad w=  1 -\frac{ r^4}{(1 + r')^4}=
\frac{4(2-r^2) r' -8(1-r^2)}{r^4}.
\end{equation*}
As Table \ref{LanFunc} suggests, the functions in the Landen sequence
become more involved when $|p|$ increases.
\begin{table}[H]
\centering
\begin{tabular}{|c|c|c|c|c|}
    \hline
 $L(r,-2)$ &   $L(r,-1)$ & $L(r,0)$ & $L(r,1)$ & $L(r,2)$ \\
   \hline
$\displaystyle{\frac{r^4}{(1+\sqrt{w})^2 (1+r')^4}}$ &$\displaystyle{\frac{r^2}{(1+r')^2}}$
& $r$     &$\displaystyle{\frac{2\sqrt{r}}{1+r}}$ &$\displaystyle{\frac{2\sqrt{2} r^{1/4}\sqrt{1+r}}{(1+\sqrt{r})^2}}$  \\
\hline
\end{tabular}
\vspace{0.3cm}
\caption{A few functions of the Landen sequence.}
\label{LanFunc}
\end{table}

\begin{proposition}
  Let $r\in(0,1)$ and $p=0,1,2,\ldots$. Then\\
  \noindent
  {\rm i)} \begin{equation}\label{two ineq for Landen}
  L(r,-p-1)<r^{2^{p}}\leq r;
\end{equation}
\noindent
{\rm ii)} \begin{equation}\label{lower bound for Landen}
      L(r,p+1)>r^{2^{-p}}\geq r.
    \end{equation}
\end{proposition}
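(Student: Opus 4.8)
The plan is to reduce both assertions to a single elementary one-step estimate and then propagate it by induction, using the semigroup law $L(r,p+q)=L(L(r,p),q)$ together with the fact that each $L(\cdot,p)$ maps $(0,1)$ into $(0,1)$. The two right-hand inequalities $r^{2^{p}}\le r$ and $r^{2^{-p}}\ge r$ require no work: for $r\in(0,1)$ the map $t\mapsto r^{t}$ is decreasing, and $2^{p}\ge 1\ge 2^{-p}$ for $p\ge 0$, with equality only at $p=0$. So the content lies entirely in the strict inequalities $L(r,-p-1)<r^{2^{p}}$ and $L(r,p+1)>r^{2^{-p}}$.

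For the main inequality in (i), I would first record the one-step bound
\[
L(s,-1)=\frac{s^{2}}{(1+\sqrt{1-s^{2}})^{2}}<s^{2}\qquad(s\in(0,1)),
\]
which holds simply because $(1+\sqrt{1-s^{2}})^{2}>1$ whenever $s<1$. Writing $c_{p}:=L(r,-p)$, the descending recursion and the semigroup law give $c_{p+1}=L(c_{p},-1)<c_{p}^{\,2}$, the bound being applicable since $c_{p}\in(0,1)$ by the homeomorphism property noted above. I then induct on $p\ge 1$ on the statement $c_{p}<r^{2^{p-1}}$: the base case is $c_{1}=L(r,-1)<r^{2}<r=r^{2^{0}}$ from the one-step bound with $s=r$, and the inductive step is $c_{p+1}<c_{p}^{\,2}<\bigl(r^{2^{p-1}}\bigr)^{2}=r^{2^{p}}$. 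Reindexing $p\mapsto p+1$ yields $L(r,-p-1)<r^{2^{p}}$ for all $p\ge 0$.

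For (ii) I would exploit the inversion identity $y=L(r,p+1)\Leftrightarrow r=L(y,-p-1)$ rather than run a second induction. Setting $y:=L(r,p+1)\in(0,1)$ and applying the already-proved part (i) to $y$ gives $r=L(y,-p-1)<y^{2^{p}}$; extracting the positive $2^{p}$-th root yields $r^{2^{-p}}<y=L(r,p+1)$, as claimed. (Alternatively, (ii) can be proved directly by the same induction scheme based on the dual one-step estimate $L(s,1)=\tfrac{2\sqrt{s}}{1+s}>\sqrt{s}$ for $s\in(0,1)$, using the ascending recursion in place of the descending one.)

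The computations here are routine, so the only genuine step is locating the clean one-step estimate $L(s,-1)<s^{2}$ (equivalently $L(s,1)>\sqrt{s}$) and keeping the exponents $2^{\pm p}$ correctly aligned through the induction; the one point to watch is that every iterate remains in $(0,1)$, which is exactly what lets the one-step bound be reapplied at each stage and is guaranteed by the homeomorphism property from the start of this section.
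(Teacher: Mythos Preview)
Your argument is correct. For part (i) you run essentially the same induction as the paper, organized around the clean one-step bound $L(s,-1)<s^{2}$; the paper instead writes the inductive step as
\[
L(r,-k-1)=\Bigl(\tfrac{L(r,-k)}{1+\sqrt{1-L(r,-k)^{2}}}\Bigr)^{2}\le \tfrac{r^{2^{k}}}{(1+\sqrt{1-r^{2^{k}}})^{2}}<r^{2^{k}},
\]
which is the same idea filtered through the monotonicity of $x\mapsto x^{2}/(1+\sqrt{1-x^{2}})^{2}$. For part (ii) you take a genuinely different route: the paper simply says the proof is analogous to (i) (i.e.\ the direct induction from $L(s,1)>\sqrt{s}$ that you mention as an alternative), whereas your primary argument deduces (ii) from (i) via the inversion $r=L(y,-p-1)$ and a $2^{p}$-th root. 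That shortcut is a nice touch---it avoids rerunning the induction and makes the duality between the ascending and descending sequences do the work---at the modest cost of relying on the inverse-function identity recorded just before the proposition.
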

\begin{proof}
 i) We prove the assertion by induction. It is clear that it holds for $p=0$. We assume that the inequality holds for $p=k-1$, i.e. $ L(r,-k)<r^{2^{k-1}}$. By the second identity of \eqref{Landen}, and using the last inequality, we get
\begin{equation*}
  L(r,-k-1)=\left(\frac{L(r,-k)}{1+\sqrt{1-L(r,-k)^2}}\right)^2\leq \frac{r^{2^k}}{\left(1+\sqrt{1-r^{2^k}}\right)^2}<r^{2^k}
\end{equation*}
for all $r\in(0,1)$, and $k>1$.\\
ii) Due to the similarity of the proof to i) we omit the details.
\end{proof}
\begin{proposition}
  The following identities hold for all $r\in(0,1)$ and  $p=0,1,2,3,\ldots$.
  \begin{equation}\label{1st Iden}
    L(r,p)^2+L(r',-p)^2=1;
  \end{equation}
  \begin{equation}\label{2nd Iden}
    L(r,p-1)=\frac{1-L(r',-p)}{1+L(r',-p)}.
  \end{equation}
\end{proposition}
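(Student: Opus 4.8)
The plan is to prove the first identity \eqref{1st Iden} by induction on $p$ and then deduce the second identity \eqref{2nd Iden} from it by a short algebraic manipulation. For the base case $p=0$ of \eqref{1st Iden} I would simply note that $L(r,0)^2+L(r',0)^2=r^2+(r')^2=1$ by the definition of $r'$. For the inductive step I would assume that $L(r,p)^2+L(r',-p)^2=1$ holds and aim to establish the same identity at $p+1$, exploiting the two recursions in \eqref{Landen} simultaneously (ascending for $L(r,\cdot)$, descending for $L(r',-\cdot)$).

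The key observation driving the induction is that the inductive hypothesis lets me collapse the radical appearing in the descending recursion. Writing $a=L(r,p)$ and $b=L(r',-p)$, the hypothesis reads $a^2+b^2=1$, so $\sqrt{1-b^2}=a$ since both quantities are positive. Substituting this into the descending rule of \eqref{Landen} gives $L(r',-p-1)=\bigl(b/(1+a)\bigr)^2=(1-a^2)/(1+a)^2=(1-a)/(1+a)$, while the ascending rule gives $L(r,p+1)^2=4a/(1+a)^2$. Adding the two squared quantities then reduces to the elementary identity $4a/(1+a)^2+(1-a)^2/(1+a)^2=(1+a)^2/(1+a)^2=1$, which closes the induction. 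Throughout I would use that every value $L(\cdot,\cdot)$ lies in $(0,1)$, so that all square roots are taken with the positive sign.

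For the second identity \eqref{2nd Iden} I would feed the ascending recursion into the already-proven \eqref{1st Iden}. Setting $s=L(r,p-1)$, the recursion gives $L(r,p)=2\sqrt{s}/(1+s)$, hence $L(r,p)^2=4s/(1+s)^2$; plugging this into \eqref{1st Iden} yields $L(r',-p)^2=1-4s/(1+s)^2=(1-s)^2/(1+s)^2$, and taking the positive square root gives $L(r',-p)=(1-s)/(1+s)$. Solving this M\"obius relation for $s$ produces $s=(1-L(r',-p))/(1+L(r',-p))$, which is precisely \eqref{2nd Iden}. I would also remark that the ascending recursion is valid at every index because of the semigroup law $L(r,p+q)=L(L(r,p),q)$, so the endpoint case $p=0$ (which refers to $L(r,-1)$) is covered as well.

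The main obstacle, and really the only nontrivial point, is the inductive step for \eqref{1st Iden}: one must recognize that the partner value $L(r',-p)$ is exactly what makes the awkward radical $\sqrt{1-L(r',-p)^2}$ in the descending rule collapse to $L(r,p)$, after which the whole expression telescopes. Once this structural pairing between the ascending and descending sequences is spotted, both identities follow from routine algebra, with the only care needed being the consistent choice of positive square roots, which is justified by $L(\cdot,\cdot)\in(0,1)$.
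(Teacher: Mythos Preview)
Your proposal is correct and follows essentially the same route as the paper: induction on $p$ for \eqref{1st Iden}, using the inductive hypothesis to collapse $\sqrt{1-L(r',-p)^2}$ to $L(r,p)$, and then deriving \eqref{2nd Iden} from \eqref{1st Iden} by elementary algebra. The only cosmetic difference is that for \eqref{2nd Iden} the paper starts from the right-hand side and compares with the descending formula for $L(r,p-1)$, whereas you start from $s=L(r,p-1)$, push forward via the ascending recursion, and invert; the two computations are trivially equivalent.
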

\begin{proof}
  In order to prove the first identity, we use induction. It is clear that the identity \eqref{1st Iden} holds true for $p=0$. Assume that \eqref{1st Iden} holds true when $p=k$, i.e.
  \begin{equation*}
    L(r,k)^2+L(r',-k)^2=1.
  \end{equation*}
  By using \eqref{Landen} and the last identity, we obtain
  \begin{align*}
    L(r,k+1)^2+L(r',-k-1)^2 &=\left(\frac{2\sqrt{L(r,k)}}{1+L(r,k)}\right)^2
    +\left(\frac{L(r',-k)}{1+\sqrt{1-L(r',-k)^2}}\right)^4 \\
    &=\frac{4L(r,k)(1+L(r,k))^2+(1-L(r,k)^2)^2}{(1+L(r,k))^4}=1
  \end{align*}
  concluding the proof of \eqref{1st Iden}.\\
The identity \eqref{2nd Iden} can be proved by applying \eqref{1st Iden}. We have
\begin{equation}\label{qu 1-L(rp p) d plus}
  \frac{1-L(r',-p)}{1+L(r',-p)}=\frac{1-\sqrt{1-L(r,p)^2}}{1+\sqrt{1-L(r,p)^2}}
=\left(\frac{1-\sqrt{1-L(r,p)^2}}{L(r,p)}\right)^2.
\end{equation}
On the other hand, if we replace $p$ by $-p$ in the second identity of \eqref{Landen} we get
\begin{equation}\label{L(r,p-1)}
  L(r,p-1)=\left(\frac{L(r,p)}{1+\sqrt{1-L(r,p)^2}}\right)^2.
\end{equation}
A simple calculation shows that the right-hand side of \eqref{qu 1-L(rp p) d plus} is equal to the last identity \eqref{L(r,p-1)}. The proof is now complete.
\end{proof}
\begin{remark}
  It follows from \eqref{1st Iden} and the first inequality of \eqref{two ineq for Landen} that
  \begin{equation*}
    L(r,p+1)=\sqrt{1-L(r',-p-1)^2}>\sqrt{1-(r')^{2^{p+1}}}:=\ell(r,p).
  \end{equation*}
  Computer experiment shows that this lower bound, i.e. $\ell(r,p)$, is better than the lower bound in \eqref{lower bound for Landen}, i.e. $r^{2^{-p}}$, when $r$ is close to one.
\end{remark}
The complete elliptic integral
\begin{equation*}
\K(r)=\int^1_0 \frac{dx}{\sqrt{(1-x^2)(1-r^2x^2)}},\quad
r\in(0,1),
\end{equation*}
defines a homeomorphism $\K :(0,1)\to (\pi/2, \infty).$
The following two identities due to Landen express important properties
of the complete elliptic integral $\K(r)$ \cite[p. 12]{borw}
(see also \cite[p. 51]{avv})
\begin{equation}\label{two iden K}
  \left\{
    \begin{array}{ll}
      \K\left(\displaystyle{\frac{2\sqrt{r}}{1+r}}\right)=(1+r)\K(r); \\\\
      \K\left(\displaystyle{\frac{1-r}{1+r}}\right)=\displaystyle{\frac{1}{2}(1+r)}\K(r'),
    \end{array}
  \right.
\end{equation}
The first identity of \eqref{two iden K} shows that
$\K(r) = \K(L(r,-1)) (1+ L(r,-1)).$ This observation
is the basis of the following classical result, see Ref. \cite[p. 14]{borw}.
Observe that $\K(L(r,-p)) \to \pi/2$ when $p \to \infty.$
\begin{lemma}\label{lem-K(r)=pi 2}
  For $r\in (0,1)$ we have
  \begin{equation*}
    \K(r)=\frac{\pi}{2}\prod_{n=0}^{\infty}\frac{2}{1+L(r',-n)}
    =\frac{\pi}{2}\prod_{n=1}^{\infty}(1+L(r,-n)).
  \end{equation*}
\end{lemma}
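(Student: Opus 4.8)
The plan is to iterate the elementary descending step already recorded above, namely $\K(r)=(1+L(r,-1))\,\K(L(r,-1))$, which comes from the first identity of \eqref{two iden K} after replacing $r$ by $L(r,-1)$ and using $L(L(r,-1),1)=L(r,0)=r$. First I would apply this same step with $L(r,-1)$ in place of $r$; since $L(L(r,-1),-1)=L(r,-2)$ by the semigroup law $L(L(r,p),q)=L(r,p+q)$, an easy induction on $N$ yields the telescoping identity
\begin{equation*}
\K(r)=\left(\prod_{n=1}^{N}\bigl(1+L(r,-n)\bigr)\right)\K(L(r,-N)),\qquad N=1,2,3,\dots
\end{equation*}

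Next I would let $N\to\infty$. By the first inequality of \eqref{two ineq for Landen} we have $L(r,-N)<r^{2^{N-1}}\to 0$, so the descending sequence decays doubly exponentially; hence $\sum_{n\ge 1}L(r,-n)<\infty$ and the infinite product $\prod_{n\ge 1}(1+L(r,-n))$ converges. Since $\K$ extends continuously to the endpoint $0$ with $\K(0^+)=\pi/2$ (recall that $\K:(0,1)\to(\pi/2,\infty)$ is a homeomorphism), the trailing factor satisfies $\K(L(r,-N))\to \pi/2$, and the telescoping identity passes to the limit to give the second product $\K(r)=\frac{\pi}{2}\prod_{n=1}^{\infty}(1+L(r,-n))$.

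To obtain the first product I would rewrite each factor by means of \eqref{2nd Iden} applied with $r$ replaced by $r'$ (using $(r')'=r$), which gives $1+L(r,-p)=\frac{2}{1+L(r',p-1)}$; a shift of the index then turns the second product into the first one. Alternatively, the same first product can be produced directly by iterating the second identity of \eqref{two iden K} in the form $\K(r)=\frac{2}{1+r'}\,\K(L(r,-1))$ and invoking \eqref{1st Iden} to replace $\sqrt{1-L(r,-n)^2}$ by the corresponding ascending Landen value at each step.

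The routine algebra — verifying the telescoping step and the factor identity coming from \eqref{2nd Iden} — is straightforward. The one point that needs genuine care is the passage to the limit $N\to\infty$: one must confirm both that the infinite product converges and that $\K(L(r,-N))\to\pi/2$. Both facts are controlled by the doubly-exponential decay $L(r,-N)<r^{2^{N-1}}$ from \eqref{two ineq for Landen}, so I expect this limiting argument, rather than any single algebraic identity, to be the main (though mild) obstacle.
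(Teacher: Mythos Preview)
Your argument is correct and is precisely the approach the paper sketches in the two sentences preceding the lemma (which it then cites to \cite{borw} without further proof): iterate the descending step $\K(r)=(1+L(r,-1))\,\K(L(r,-1))$ and pass to the limit using $\K(L(r,-p))\to\pi/2$. One small point to watch in the conversion: your (correct) identity $1+L(r,-p)=\dfrac{2}{1+L(r',p-1)}$, after the index shift, produces $\displaystyle\prod_{n\ge 0}\frac{2}{1+L(r',n)}$ with the \emph{ascending} index $n$, not $-n$ as printed; this is a sign slip in the displayed formula rather than a flaw in your reasoning, and your remark about ``the corresponding ascending Landen value'' in the alternative derivation already reflects this.
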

Lemma \ref{lem-K(r)=pi 2} gives fast
converging methods for numerical evaluation of $\K(r)$.
\begin{nonsec}{\bf Arithmetic-Geometric Mean.}\label{AG mean sec}
For $0<b<a$ define $a_0=a$, $b_0=b$, $a_{n+1}=(a_n+b_n)/2$ and $b_{n+1}=\sqrt{a_n b_n}$. The common limit of these sequences
\begin{equation*}
  AG(a,b)=\lim_{n\rightarrow\infty}a_n=\lim_{n\rightarrow\infty}b_n,
\end{equation*}
is the {\it arithmetic geometric mean} of $a$ and $b$.
An in-depth discussion of this is found in \cite{borw}, see
also Ref. \cite{bb93}. For the history of the arithmetic geometric mean,
we refer to \cite[pp. 17-27]{R}.

One of the basic properties of $AG(a,b)$ is that (see Ref. \cite[Lemma 4.3]{avv})
\begin{equation*}
  AG(a,b)=a AG(1, b/a),\quad a>0, b\geq0.
\end{equation*}
\end{nonsec}
\begin{theorem}\label{Thm-Gauss}
  {\rm(}Gauss{\rm)} For $r\in(0,1)$
  \begin{equation*}
    \K(r)=\frac{\pi}{2AG(1,r')}.
  \end{equation*}
\end{theorem}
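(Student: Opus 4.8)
The plan is to run the arithmetic--geometric mean iteration of \ref{AG mean sec} from the normalization $a_0=1,\ b_0=r'$ and to uncover the ascending Landen transformation hidden inside it. By the scaling property $AG(a,b)=a\,AG(1,b/a)$ it is enough to treat this normalization, for which $AG(1,r')=\lim_n a_n$. First I would follow the ratio $s_n=b_n/a_n\in(0,1)$; it starts at $s_0=r'=L(r',0)$, and a one-line computation gives
\begin{equation*}
s_{n+1}=\frac{b_{n+1}}{a_{n+1}}=\frac{\sqrt{a_nb_n}}{(a_n+b_n)/2}=\frac{2\sqrt{s_n}}{1+s_n}.
\end{equation*}
This is precisely the ascending recursion in \eqref{Landen}, so $s_n=L(r',n)$ for every $n\ge 0$. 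In particular $s_n\to 1$, which re-establishes the convergence of the iteration in this normalization.

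Next I would turn the sequence $(a_n)$ into an infinite product. Since $a_{n+1}=(a_n+b_n)/2=a_n\,(1+s_n)/2$ and $a_0=1$, telescoping gives
\begin{equation*}
a_N=\prod_{n=0}^{N-1}\frac{1+L(r',n)}{2},
\end{equation*}
and letting $N\to\infty$ produces the representation
\begin{equation*}
AG(1,r')=\prod_{n=0}^{\infty}\frac{1+L(r',n)}{2}.
\end{equation*}
The factors tend to $1$ because $L(r',n)\to1$, so the product converges, in agreement with the existence of the limit.

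Finally I would compare this with Lemma \ref{lem-K(r)=pi 2}. Taking reciprocals,
\begin{equation*}
\frac{1}{AG(1,r')}=\prod_{n=0}^{\infty}\frac{2}{1+L(r',n)},
\end{equation*}
which coincides with the product representation of $\K(r)$ there up to the factor $\pi/2$; multiplying by $\pi/2$ then yields $\K(r)=\frac{\pi}{2AG(1,r')}$, as claimed.

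The two algebraic simplifications (the ratio recursion and the telescoping) are routine; the step that needs the most care is the bookkeeping that identifies the AGM product $\prod (1+L(r',n))/2$ with the product for $\K(r)$. One must line up the indices and the ascending-versus-descending labels, and if one prefers to match the form $\frac{\pi}{2}\prod_{n=1}^{\infty}(1+L(r,-n))$ of Lemma \ref{lem-K(r)=pi 2}, then the identities \eqref{1st Iden} and \eqref{2nd Iden} are exactly what convert the ascending factors $L(r',n)$ into the descending factors $L(r,-n)$.
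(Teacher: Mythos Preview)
The paper does not supply a proof of Theorem~\ref{Thm-Gauss}; it is quoted as a classical result of Gauss and then used. Your argument is correct and is in fact the natural deduction from the surrounding material: identifying $b_n/a_n$ with the ascending Landen sequence $L(r',n)$ and telescoping gives $AG(1,r')=\prod_{n\ge 0}\frac{1+L(r',n)}{2}$, which is then compared with Lemma~\ref{lem-K(r)=pi 2}. The paper actually runs this implication in the opposite direction immediately after the theorem, combining Lemma~\ref{lem-K(r)=pi 2} with Theorem~\ref{Thm-Gauss} to obtain a product formula for $AG(1,s)$; your proof simply reverses that step.

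One small point on the final matching: your product $\prod_{n\ge 0}\frac{2}{1+L(r',n)}$ does not literally coincide with the first product displayed in Lemma~\ref{lem-K(r)=pi 2}, which is written with $L(r',-n)$. The clean comparison is with the second form $\frac{\pi}{2}\prod_{n\ge 1}(1+L(r,-n))$: applying \eqref{2nd Iden} with the roles of $r$ and $r'$ interchanged gives $\frac{2}{1+L(r',m)}=1+L(r,-m-1)$ for $m\ge 0$, which converts your product into that one term by term. It would strengthen the write-up to carry out this one-line conversion explicitly rather than only citing \eqref{1st Iden} and \eqref{2nd Iden}; in fact only \eqref{2nd Iden} is needed.
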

From Lemma \ref{lem-K(r)=pi 2} and Theorem \ref{Thm-Gauss} we obtain the identity for $s\in(0,1)$
\begin{equation*}
  AG(1,s)=\prod_{n=0}^{\infty}\left(\frac{1+L(s,-n)}{2}\right).
\end{equation*}
\begin{lemma}
  Consider the arithmetic geometric mean iteration with $a=1$, $b\in(0,1)$, $\alpha\in[0,1]$, and let $b_n$ be the $n^{\rm th}$ iterate of the $b$-sequence. Then $b_n<L(b^\alpha,n)$ for $n=1,2,3\ldots$.
\end{lemma}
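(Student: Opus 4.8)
The plan is to track the ratio of the two AGM sequences and to recognize it as an ascending Landen sequence. I would set $r_n = b_n/a_n$ for $n=0,1,2,\ldots$ and compute, using $a_{n+1}=(a_n+b_n)/2$ and $b_{n+1}=\sqrt{a_n b_n}$, that
\[
  r_{n+1}=\frac{b_{n+1}}{a_{n+1}}=\frac{2\sqrt{a_n b_n}}{a_n+b_n}=\frac{2\sqrt{r_n}}{1+r_n}.
\]
Since $a=1$ forces $r_0=b/1=b$, this is precisely the ascending Landen recursion \eqref{Landen} started at $b$, so $r_n=L(b,n)$ for all $n\ge 0$; equivalently $b_n=a_n\,L(b,n)$. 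This identification is the heart of the argument, and once it is in place the required estimate becomes elementary.

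Next I would control the factor $a_n$. Since $b_0=b<1=a_0$, an induction using the inequality between the geometric and arithmetic means gives $b_{n+1}=\sqrt{a_n b_n}<(a_n+b_n)/2=a_{n+1}$ whenever $b_n<a_n$, so $b_n<a_n$ for every $n$. Consequently $a_{n+1}=(a_n+b_n)/2<a_n$, i.e. the $a$-sequence is strictly decreasing, whence $a_n<a_0=1$ for all $n\ge 1$. Combining this with $b_n=a_n\,L(b,n)$ and $L(b,n)>0$ yields
\[
  b_n=a_n\,L(b,n)<L(b,n),\qquad n\ge 1.
\]
The restriction to $n\ge 1$ is genuine, since at $n=0$ one has $a_0=1$ and $b_0=L(b,0)$, so only equality holds there.

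Finally I would introduce the exponent $\alpha$ purely by monotonicity. For $b\in(0,1)$ and $\alpha\in[0,1]$ we have $b^\alpha\ge b$, and each $L(\cdot,n)$ is an increasing homeomorphism of $(0,1)$, so $L(b^\alpha,n)\ge L(b,n)$. Chaining this with the previous display gives $b_n<L(b,n)\le L(b^\alpha,n)$ for $n\ge 1$, which is the assertion. The one point requiring a word of care is the endpoint $\alpha=0$, where $b^\alpha=1$ lies on the boundary of the domain of $L(\cdot,n)$; there one reads $L(1,n)=1$ as the fixed point of the Landen map and the inequality $b_n<1$ holds a fortiori.

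The main obstacle is really just spotting the first step, namely that $b_n/a_n$ obeys the Landen recursion. After that the proof reduces to two routine facts, that the AGM $a$-sequence decreases strictly below $1$ and that $L$ is increasing in its first argument, so no delicate estimate is needed.
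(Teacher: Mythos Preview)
Your proof is correct and takes a genuinely different route from the paper's. The paper argues by direct induction on the inequality $b_n<L(b^\alpha,n)$: it checks $b_1=\sqrt{b}<2\sqrt{b^\alpha}/(1+b^\alpha)$ by hand, and in the inductive step uses $a_k<1$ together with the monotonicity of $t\mapsto 2\sqrt{t}/(1+t)$ to chain $b_{k+1}=\sqrt{a_k b_k}<\sqrt{b_k}<2\sqrt{b_k}/(1+b_k)<L(b^\alpha,k+1)$.

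Your argument instead uncovers the exact identity $b_n/a_n=L(b,n)$, which is the classical link between the AGM iteration and the ascending Landen sequence, and then reduces the lemma to two elementary monotonicity facts: $a_n<1$ for $n\ge 1$, and $L(\cdot,n)$ increasing together with $b\le b^\alpha$. This is cleaner and more informative, since it isolates the sharpest case $\alpha=1$ as an equality-times-a-factor-less-than-one and obtains the full range $\alpha\in[0,1]$ for free by monotonicity, whereas the paper's induction carries the parameter $\alpha$ through every step. The trade-off is that the paper's proof is entirely self-contained at the level of inequalities, while yours relies on spotting the structural identity for $b_n/a_n$.
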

\begin{proof}
  We use induction to prove this lemma. We have
  \begin{equation*}
    b_1=\sqrt{a_0b_0}=\sqrt{b}\quad{\rm and }\quad L(b^\alpha,1)=\frac{2\sqrt{b^\alpha}}{1+b^\alpha}.
  \end{equation*}
  It is easy to see that $\sqrt{b}<2\sqrt{b^\alpha}/(1+b^\alpha)$ holds for all $b\in(0,1)$ and $\alpha\in[0,1]$. Let $b_k<L(b^\alpha,k)$ for all $k=n>1$. We need to show that $b_{k+1}<L(b^\alpha,k+1)$. Due to the fact that $a_k\in(0,1)$ for all $k>1$, and $t\mapsto 2\sqrt{t}/(1+t)$ is an increasing function, we get
  \begin{equation*}
    b_{k+1}=\sqrt{a_k b_k}<\sqrt{b_k}<\frac{2\sqrt{b_k}}{1+b_k}<\frac{2\sqrt{L(b^\alpha,k)}}{1+L(b^\alpha,k)}
    =L(b^\alpha, k+1),
  \end{equation*}
concluding the proof.
\end{proof}
\begin{remark}
  (1) There is a large body of literature about the properties of $\K(r)$ due, in particular, to the authors of \cite{qmc} and their students. See also the literature survey \cite{avz}.\\
  \noindent
  (2)  The {\it logarithmic mean} of $a,b>0, a \neq b$, is defined by (see \cite[p. 77]{avv})
  \begin{equation*}
  \mathcal{L}(a,b) = \frac{a-b}{\log(a/b)}=\prod_{k=1}^{\infty}\frac{a^{2^{-k}}+b^{2^{-k}}}{2}.
  \end{equation*}
  Denote $\mathcal{L}_t(a,b)= \mathcal{L}(a^t,b^t)^{1/t} $ for $t>0$. As shown in
\cite[Proposition 2.7]{bb93} the following very sharp inequality holds for $x\in(0,1)$
\begin{equation*}
\mathcal{L}_{3/2}(1,x)>AG(1,x) > \mathcal{L}(1,x).
\end{equation*}
\end{remark}
For what follows, the following decreasing homeomorphism
$\mu:(0,1)\to (0,\infty),$
\begin{equation}\label{muDef}
\mu(r)=\frac{\pi}{2}\frac{\K(r')}{\K(r)},\quad
r\in(0,1),
\end{equation}
is crucial. From \eqref{two iden K} we obtain \cite[p. 51]{avv}
\begin{equation}\label{two iden for mu}
  \mu\left(\frac{2\sqrt{r}}{1+r}\right)=\frac{1}{2}\mu(r);\quad
\mu\left(\frac{1-r'}{1+r'}\right)=2\mu(r).
\end{equation}
In terms of the Landen sequences we can write \eqref{two iden for mu} in the following form for $0<r<1$ and $p\in \mathbb{Z}$
\begin{equation}\label{mu(r)=2 p mu}
  \mu(r)=2^p \mu(L(r,p)).
\end{equation}
By \eqref{muDef} it is clear that
\begin{equation*}
  \mu(r) \mu(r')= \frac{\pi^2}{4}.
\end{equation*}
By Jacobi's work \cite[p. 462, (B.25)]{hkv} the inverse of $\mu$ can be expressed
in terms of theta-functions as follows for $y > 0$
\begin{equation*}
  \mu^{-1}(y) = \left(\frac{2 \sum^{\infty}_{n=0} q^{(n+1/2)^2}}{
  1+ 2 \sum^{\infty}_{n=1} q^{n^2}}\right)^2, \quad q= \exp(-2y).
\end{equation*}
Jacobi also proved formulas for $  \mu^{-1}(y)$ as infinite products,
see \cite[p. 91]{avv}.

Below we also study the special function
\begin{equation}\label{varphi}
  \varphi_K(r)=\mu^{-1}\left(\mu(r)/K\right),\quad r\in(0,1), K>0,
\end{equation}
It defines a homeomorphism $\varphi_K:[0,1]\rightarrow[0,1]$ with limit
values $\varphi_K(0)=0$ and $\varphi_K(1)=1$. The basic estimate for $ \varphi_K(r)$, $K\geq 1$, $r\in(0,1)$ is \cite[Thorem 10.9(1)]{avv}
\begin{equation}\label{esti varphi}
  r^{1/K}< \varphi_K(r)<4^{1-1/K} r^{1/K}.
\end{equation}
For information about this and other related inequalities the reader is referred
to \cite[p.319, 16.51]{hkv}.
By \eqref{mu(r)=2 p mu}
it is clear that for $r \in (0,1), p \in \mathbb{Z},$
\begin{equation}\label{varphi2}
 \varphi_{2^p}(r)= L(r,p).
\end{equation}
It is noteworthy that the functions $\mu, \mu^{-1}, \varphi_K$ satisfy many
functional identities \cite{avv0, avv}. For instance, the Pythagorean type
identity of the Landen transformation \eqref{1st Iden} has the following
counterparts for these functions \cite[p. 463, p. 125]{hkv}
\begin{equation*}
\left(\mu^{-1}(y)\right)^2 +\left(\mu^{-1}\left(\frac{\pi^2}{4y}\right)\right)^2 =1, \quad y>0;
\end{equation*}
\begin{equation}\label{phiPyth}
 \varphi_K(r)^2 + \varphi_{1/K}(r')^2 =1, \quad K>0, r \in (0,1).
\end{equation}
The following inequalities hold for $r\in (0,1)$ (see \cite[p. 122, (7.21)]{hkv}):
\begin{equation}\label{bounds for mu-1}
  \log \frac{1}{r}<\log \frac{1+3r'}{r}<\log \frac{(1+\sqrt{r'})^2}{r}
  <\mu(r)<\log\frac{2(1+r')}{r}<\log\frac{4}{r}.
\end{equation}
As a result of Jacobi's work, the following inequalities hold also for $0<r<1$ \cite[p. 91, (5.30)]{avv}
\begin{equation}\label{bounds for mu-2}
\log \frac{(1+\sqrt{r'})^2}{r}<{\rm arth} \sqrt[4]{r'}<\mu(r)<\frac{\pi^2}{4 {\rm arth}\sqrt[4]{r}}.
\end{equation}
We summarize the lower and upper bounds of $\mu(r)$ with their inverses in Tables \ref{Tab Lo b} and \ref{Tab up b}, respectively.
\begin{table}[H]
\centering
\begin{tabular}{|c|c|c|}
    \hline
 $j$ & $u_j(r)$ &  $u_j^{-1}(y)$ \\
   \hline
  $1$ & $\displaystyle{{\rm arth}\sqrt[4]{r'}}$ & $\displaystyle{\sqrt{1-{\rm th}^8(y)}}$ \\
  \hline
  $2$ & $\displaystyle{\log \frac{(1+\sqrt{r'})^2}{r}}$ & $-$ \\
  \hline
  $3$ & $\displaystyle{\log\frac{1+3r'}{r}}$ & $\displaystyle{\frac{\exp(y)+3\sqrt{8+\exp(2y)}}{9+\exp(2y)}}$
 \\
  \hline
  $4$ & $\displaystyle{\log (1/r)}$ & $\displaystyle{\exp(-y)}$\\
  \hline
  $5$ &   $\displaystyle{\log \frac{1+r'}{r}}$ & $\displaystyle{\frac{2\exp(y)}{1+\exp(2y)}}$\\
  \hline
\end{tabular}
\vspace{0.3cm}
\caption{Lower bounds of $\mu$ and their inverses.
}
\label{Tab Lo b}
\end{table}
\begin{table}[H]
\centering
\begin{tabular}{|c|c|c|}
    \hline
 $j$ & $v_j(r)$ &  $v_j^{-1}(y)$ \\
   \hline
    $1$ &$\displaystyle{\frac{\pi^2}{4 {\rm arth}\sqrt[4]{r}}}$ &
     $\displaystyle{{\rm th}^4 (\pi^2/4y)}$  \\
   \hline
    $2$ & $\displaystyle{\log \frac{2(1+r')}{r}}$ &
    $\displaystyle{\frac{4\exp(\max\{y,\log 2\})}{4+\exp(2\max\{y,\log 2\})}}$ \\
   \hline
    $3$ &  $\displaystyle{\log(4/r)}$ &  $\displaystyle{4\exp(-\max\{y,\log 4\})}$\\
   \hline
 \end{tabular}
\vspace{0.3cm}
\caption{Upper bounds of $\mu$ and their inverses. 
}
\label{Tab up b}
\end{table}

\begin{table}[H]
\centering
\begin{tabular}{|c|c|c|}
    \hline
    Lower bounds & Upper bounds\\
    \hline
 $\displaystyle{\sqrt{1-{\rm th}^8(y)}}$ & $\displaystyle{{\rm th}^4 (\pi^2/4y)}$ \\
 \hline
  $\displaystyle{\frac{\exp(y)+3\sqrt{8+\exp(2y)}}{9+\exp(2y)}}$ &  $\displaystyle{\frac{4\exp(\max\{y,\log 2\})}{4+\exp(2\max\{y,\log 2\})}}$\\
  \hline
  $\displaystyle{\exp(-y)}$ & $\displaystyle{4\exp(-\max\{y,\log 4\})}$\\
  \hline
  $\displaystyle{\frac{2\exp(y)}{1+\exp(2y)}}$ & $-$\\
 \hline
\end{tabular}
\vspace{0.3cm}
\caption{Upper and lower bounds for $\mu^{-1}(y)$.
}
\label{Tab up lo b for InvMu}
\end{table}
\noindent
We ignore the inverse of $u_2$ in Table \ref{Tab Lo b} since it is a very complicated formula.
\begin{lemma}
  Assume that $u,v:(0,1)\rightarrow(0,\infty)$ are decreasing homeomorphisms with
\begin{equation}\label{ineq u mu v}
  u(r)<\mu(r)<v(r),\quad 0<r<1.
\end{equation}
 Then
  \begin{equation*}
    u^{-1}(v(r)/K) <\varphi_K(r) < v^{-1}(u(r)/K)
  \end{equation*}
  for all $K>1$ and $r\in(0,1)$.
\end{lemma}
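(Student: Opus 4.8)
The plan is to reduce the two-sided bound for $\varphi_K(r)=\mu^{-1}(\mu(r)/K)$ to a statement about the inverse functions, and then to finish by pure monotonicity. The pivotal observation I would record first is that inverting the hypothesis \eqref{ineq u mu v} reverses the chain: for every $y>0$ one has
\begin{equation*}
u^{-1}(y)<\mu^{-1}(y)<v^{-1}(y).
\end{equation*}
Here I am using that $u,v$, and hence $\mu$, are decreasing homeomorphisms of $(0,1)$ onto $(0,\infty)$, so that $u^{-1},v^{-1}:(0,\infty)\to(0,1)$ are again decreasing homeomorphisms.

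To establish this intermediate inequality I would fix $y>0$, set $r_0=\mu^{-1}(y)$ so that $\mu(r_0)=y$, and feed $r_0$ into \eqref{ineq u mu v} to obtain $u(r_0)<y<v(r_0)$. Applying the decreasing maps $u^{-1}$ and $v^{-1}$ reverses these inequalities and yields $u^{-1}(y)<r_0<v^{-1}(y)$, which is exactly the claim since $r_0=\mu^{-1}(y)$. With this in hand the lemma follows by a short sandwich argument. For the upper bound, apply the right half of the intermediate inequality at $y=\mu(r)/K$ to get $\varphi_K(r)<v^{-1}(\mu(r)/K)$, and then use that $v^{-1}$ is decreasing together with $u(r)<\mu(r)$, so that $u(r)/K<\mu(r)/K$, to conclude $v^{-1}(\mu(r)/K)<v^{-1}(u(r)/K)$. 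The lower bound is entirely symmetric: the left half gives $\varphi_K(r)>u^{-1}(\mu(r)/K)$, and $\mu(r)<v(r)$ together with the monotonicity of $u^{-1}$ gives $u^{-1}(\mu(r)/K)>u^{-1}(v(r)/K)$. Chaining the two inequalities in each case produces $u^{-1}(v(r)/K)<\varphi_K(r)<v^{-1}(u(r)/K)$.

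The only real pitfall is bookkeeping the directions of the inequalities, since every step passes through a decreasing map and one must not conflate the correctly inverted relation $u^{-1}(y)<\mu^{-1}(y)$ with the wrong orientation. I would guard against this by verifying each reversal through the explicit substitution $r_0=\mu^{-1}(y)$ rather than by formal manipulation. I also note that the hypothesis $K>1$ is not actually needed: the argument goes through verbatim for any $K>0$, since all that is used is $\mu(r)/K>0$ and the fact that dividing \eqref{ineq u mu v} by $K$ preserves its order. One may therefore either keep the stated hypothesis or remark that it can be relaxed.
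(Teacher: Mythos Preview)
Your proof is correct and follows essentially the same approach as the paper: both arguments rest on the inverted chain $u^{-1}(y)<\mu^{-1}(y)<v^{-1}(y)$ together with the monotonicity of the decreasing inverses, the only cosmetic difference being the order in which the two steps are applied (the paper first sandwiches $\varphi_K(r)$ between $\mu^{-1}(v(r)/K)$ and $\mu^{-1}(u(r)/K)$ and then passes to $u^{-1},v^{-1}$, whereas you first pass to $u^{-1},v^{-1}$ at $y=\mu(r)/K$ and then shift the argument). Your remark that $K>1$ can be relaxed to $K>0$ is also correct.
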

\begin{proof}
Because $\mu$ is decreasing, and also by \eqref{ineq u mu v} we can obtain
\begin{equation*}
  \mu^{-1}(v(r)/K)<\varphi_K(r)=\mu^{-1}(\mu(r)/K)<\mu^{-1}(u(r)/K).
\end{equation*}
  It follows from $u^{-1}(y)<\mu^{-1}(y)$, $y>0$, that $u^{-1}(v(r)/K)<\mu^{-1}(v(r)/K)$. Also, since $\mu^{-1}(y)<v^{-1}(y)$, $y>0$, we get $\mu^{-1}(u(r)/K)<v^{-1}(u(r)/K)$ for all $K>1$ and $r\in(0,1)$. The proof is now complete.
\end{proof}
\begin{corollary}
  Let $u:(0,1)\rightarrow(0,\infty)$ and $v:(0,1]\rightarrow [c,\infty)$, $c>0$, be decreasing homeomorphisms which satisfy \eqref{ineq u mu v}. Then
  \begin{equation*}
    u^{-1}(v(r)/K) <\varphi_K(r) < v^{-1}(\max\{u(r)/K,c\})
\end{equation*}
  for all $K>1$ and $r\in(0,1)$.
\end{corollary}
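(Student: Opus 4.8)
The plan is to follow the structure of the preceding lemma, where the only new feature is the restricted domain $[c,\infty)$ of $v^{-1}$. Two ingredients carry over from the lemma. First, the sandwiching $u(r)<\mu(r)<v(r)$ of \eqref{ineq u mu v} forces, on the common domains of the inverses, both $u^{-1}(y)<\mu^{-1}(y)$ and $\mu^{-1}(y)<v^{-1}(y)$; the latter is valid only for $y\geq c$, since $v^{-1}$ is defined on $[c,\infty)$. Second, $\mu^{-1}$ is decreasing because $\mu$ is.

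For the lower bound $u^{-1}(v(r)/K)<\varphi_K(r)$, no new difficulty arises. Since $\mu(r)<v(r)$ and $K>0$ give $\mu(r)/K<v(r)/K$, the monotonicity of $\mu^{-1}$ yields $\varphi_K(r)=\mu^{-1}(\mu(r)/K)>\mu^{-1}(v(r)/K)$. Because $u:(0,1)\to(0,\infty)$ has inverse defined on all of $(0,\infty)$ and $v(r)/K>0$, the inequality $u^{-1}(v(r)/K)<\mu^{-1}(v(r)/K)$ applies, and chaining these gives the claim exactly as in the lemma.

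For the upper bound I would first note $\varphi_K(r)=\mu^{-1}(\mu(r)/K)<\mu^{-1}(u(r)/K)$, using $u(r)<\mu(r)$, as in the lemma. The obstacle is that $v^{-1}$ is defined only on $[c,\infty)$, so the step $\mu^{-1}(u(r)/K)<v^{-1}(u(r)/K)$ is legitimate only when $u(r)/K\geq c$. I would therefore split into two cases. If $u(r)/K\geq c$, then $\max\{u(r)/K,c\}=u(r)/K$ lies in the domain of $v^{-1}$, and the lemma's argument reproduces $\mu^{-1}(u(r)/K)<v^{-1}(u(r)/K)=v^{-1}(\max\{u(r)/K,c\})$. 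If instead $u(r)/K<c$, then $\max\{u(r)/K,c\}=c$, and since $v:(0,1]\to[c,\infty)$ is a decreasing homeomorphism whose minimal value $c$ is attained at $r=1$, we have $v^{-1}(c)=1$; the desired inequality then reduces to $\varphi_K(r)<1$, which holds for every $r\in(0,1)$ because $\varphi_K:[0,1]\to[0,1]$ is an increasing homeomorphism with $\varphi_K(1)=1$.

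The only genuinely new point, and hence the main thing to get right, is this case distinction: the truncation $\max\{\,\cdot\,,c\}$ is exactly what keeps the argument of $v^{-1}$ inside its domain, and in the degenerate case $u(r)/K<c$ it returns the trivial but valid majorant $v^{-1}(c)=1$. Everything else is a verbatim repetition of the monotonicity chain from the lemma, so I expect no serious difficulty beyond stating this dichotomy cleanly.
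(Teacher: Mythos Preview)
The paper gives no proof of this corollary; it is stated immediately after the lemma as an evident modification. Your argument correctly supplies the details the paper leaves implicit: the lower bound is unchanged from the lemma, and for the upper bound your case split on whether $u(r)/K\geq c$ is precisely what the truncation $\max\{u(r)/K,c\}$ encodes, with the degenerate case reducing to the trivial bound $v^{-1}(c)=1>\varphi_K(r)$. There is nothing to compare against, and your reasoning is sound.
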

\begin{example}
  Consider $u_1$ and $v_3$ as above. Since $v_3$ is a decreasing homeomorphism from $(0,1]$ onto $[\log 4,\infty)$, therefore,
    \begin{equation*}
    u_1^{-1}(v_3(r)/K) <\varphi_K(r) < v_3^{-1}(\max\{u_1(r)/K,\log 4\})
\end{equation*}
  for all $K>1$ and $r\in(0,1)$.
  \end{example}

We recall the following lemma from \cite[p. 17]{avv}:
\begin{lemma}\label{lemAVVB1.45Thm}
Let $f$ be a decreasing homeomorphism from $(0, 1)$ onto $(0, \infty)$, and let $g, h$ be strictly decreasing continuous functions from $(0, 1)$ into $(0, \infty)$ with $h(0+) = \infty$ such that $g (r)< f(r) < h (r)$. Let $C > 1$ and $s =f^{-1}(f(r)/C)$. Then
\begin{equation*}
  g(r)> C g(s)\quad and \quad h(r)<C h(s),\quad 0<r<1,
\end{equation*}
if and only if $f(r)/g(r)$ and $h(r)/f(r)$ are strictly increasing on $(0,1)$. In particular,
if both $h^{-1}(h(r)/C)$ and $g^{-1}(g(r)/C)$ are defined, then
\begin{equation*}
  g^{-1}(g(r)/C)<s<h^{-1}(h(r)/C),\quad 0<r<1.
\end{equation*}
\end{lemma}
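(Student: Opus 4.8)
The plan is to reduce both inequalities to monotonicity statements about the two quotients $\phi = f/g$ and $\psi = h/f$, exploiting the single algebraic fact that $s$ is defined precisely so that $f(r)/f(s) = C$. First I would record the elementary observation that, since $f$ is a decreasing homeomorphism and $C>1$, we have $f(s) = f(r)/C < f(r)$ and hence $s = f^{-1}(f(r)/C) > r$; this strict inequality $s > r$ is what drives everything, and the homeomorphism property of $f$ makes $C = f(r)/f(s)$ a genuine value in $(1,\infty)$.

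Next I would rewrite the two inequalities. Dividing by the positive quantity $g(s)$ and using $f(r)/f(s) = C$, the inequality $g(r) > C\,g(s)$ is equivalent to $g(r)/g(s) > f(r)/f(s)$, i.e. to $f(s)/g(s) > f(r)/g(r)$, that is $\phi(s) > \phi(r)$. In exactly the same way $h(r) < C\,h(s)$ is equivalent to $h(r)/f(r) < h(s)/f(s)$, i.e. $\psi(s) > \psi(r)$. With these two equivalences in hand the \emph{if} direction is immediate: if $\phi$ and $\psi$ are strictly increasing, then $s > r$ forces $\phi(s) > \phi(r)$ and $\psi(s) > \psi(r)$, which give back both desired inequalities.

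For the converse I would argue that the pairs $(r,s)$ produced as $C$ ranges over $(1,\infty)$ realize every pair $r_1 < r_2$ in $(0,1)$: given such a pair, set $C = f(r_1)/f(r_2)$, which exceeds $1$ because $f$ is decreasing, and then $s(r_1) = f^{-1}(f(r_1)/C) = f^{-1}(f(r_2)) = r_2$. Feeding this choice into the equivalences of the previous paragraph yields $\phi(r_2) > \phi(r_1)$ and $\psi(r_2) > \psi(r_1)$, so $\phi$ and $\psi$ are strictly increasing. \textbf{This is the step I expect to require the most care}: with a single fixed $C$ the hypothesis only compares $\phi$ along the orbit $r \mapsto s(r)$ and does not by itself force global monotonicity, so the biconditional must be read with the inequalities holding for all $C>1$ (equivalently, over all pairs $r<s$). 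The choice $C = f(r_1)/f(r_2)$ is exactly the device that closes this gap, and identifying it is the crux of the argument.

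Finally, for the \emph{in particular} conclusion I would simply invert $g$ and $h$. Since $g$ is strictly decreasing, $g^{-1}$ is strictly decreasing, so $g(s) < g(r)/C$ gives $s > g^{-1}(g(r)/C)$ whenever $g(r)/C$ lies in the range of $g$; likewise $h(s) > h(r)/C$ together with the strict monotonicity of $h$ gives $s < h^{-1}(h(r)/C)$, the standing hypothesis $h(0+)=\infty$ and the assumption that both inverse values are defined guaranteeing that the arguments fall in the appropriate ranges. Combining the two one-sided bounds produces $g^{-1}(g(r)/C) < s < h^{-1}(h(r)/C)$, which completes the argument.
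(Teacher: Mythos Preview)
Your argument is correct. The reduction of the two inequalities to the monotonicity of $\phi=f/g$ and $\psi=h/f$ via the identity $C=f(r)/f(s)$ is exactly the right move, and your remark that the converse direction requires the inequalities for \emph{all} $C>1$ (so that the choice $C=f(r_1)/f(r_2)$ realizes an arbitrary pair $r_1<r_2$) is the genuine subtlety in the statement and you handle it cleanly. The final inversion step for the bounds $g^{-1}(g(r)/C)<s<h^{-1}(h(r)/C)$ is also fine.

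As for comparison with the paper: the paper does not actually prove this lemma. It is quoted verbatim from \cite[p.~17]{avv} and used as a tool, so there is no in-text proof to compare against. Your self-contained argument is therefore more than what the paper itself supplies.
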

As an application of Lemma \ref{lemAVVB1.45Thm} we have:
\begin{lemma}
  Let $u_1(r)$ and $v_2(r)$ be defined as in Tables \ref{Tab Lo b} and \ref{Tab up b}, respectively, where $r\in(0,1)$. Then
  \begin{equation*}
    u_1^{-1}(u_1(r)/K)<\varphi_K(r)<v_2^{-1}(v_2(r)/K)
  \end{equation*}
  for all $K>1$ and $r\in(0,r_0)$, where $r_0\in(0,1)$ is such that both $u_1^{-1}(u_1(r)/K)$ and $v_2^{-1}(v_2(r)/K)$ are defined. Moreover, the first inequality is sharp in the sense that $ u_1^{-1}(u_1(r)/K)\to r$ as $K\rightarrow 1$.
\end{lemma}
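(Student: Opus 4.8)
The plan is to apply Lemma \ref{lemAVVB1.45Thm} with the decreasing homeomorphism $f=\mu$, the lower bound $g=u_1$, the upper bound $h=v_2$, the constant $C=K>1$, and the point $s=\mu^{-1}(\mu(r)/K)=\varphi_K(r)$ (recall \eqref{varphi}). Under these identifications the ``in particular'' conclusion of the lemma reads
\[
u_1^{-1}(u_1(r)/K)<\varphi_K(r)<v_2^{-1}(v_2(r)/K),
\]
so the whole argument reduces to verifying the hypotheses of Lemma \ref{lemAVVB1.45Thm} and locating the range of $r$ on which the two inverses are defined.

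First I would record the structural hypotheses. By \eqref{muDef}, $\mu$ is a decreasing homeomorphism of $(0,1)$ onto $(0,\infty)$. Both $u_1(r)={\rm arth}\sqrt[4]{r'}$ and $v_2(r)=\log\frac{2(1+r')}{r}$ are strictly decreasing continuous maps of $(0,1)$ into $(0,\infty)$, and $v_2(0+)=\infty$, so $v_2$ plays the role of $h$ with $h(0+)=\infty$. The required sandwiching $u_1(r)<\mu(r)<v_2(r)$ is exactly the combination of the lower bound in \eqref{bounds for mu-2} and the upper bound in \eqref{bounds for mu-1}.

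The crux, and the step I expect to be the main obstacle, is the monotonicity requirement of Lemma \ref{lemAVVB1.45Thm}: one must show that $\mu(r)/u_1(r)$ and $v_2(r)/\mu(r)$ are both strictly increasing on $(0,1)$. These cannot be read off from the inequalities alone and require working with derivatives. The plan is to pass to logarithmic derivatives, using the standard formula $\mu'(r)=-\pi^2/(4\,r\,(r')^2\,\K(r)^2)$ together with the elementary derivatives of $u_1$ and $v_2$, and to reduce each monotonicity claim to a single sign condition that can be settled by the monotone form of l'Hôpital's rule (cf. \cite{avv}) or, alternatively, by invoking the corresponding known monotonicity properties of $\mu$ from the literature \cite{avv, hkv, qmc}. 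Once these two monotonicities are established, Lemma \ref{lemAVVB1.45Thm} applies and yields the displayed two-sided estimate.

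Finally I would pin down the domain and the sharpness. Since $v_2$ maps $(0,1)$ decreasingly onto $(\log 2,\infty)$, the value $v_2^{-1}(v_2(r)/K)$ is defined precisely when $v_2(r)/K>\log 2$, i.e.\ for $r$ below $r_0:=v_2^{-1}(K\log 2)$; because $u_1^{-1}$ is defined on all of $(0,\infty)$, this is the only restriction, and it fixes the interval $(0,r_0)$. The sharpness of the lower bound is then immediate from continuity of $u_1^{-1}$: as $K\to 1^+$ we have $u_1(r)/K\to u_1(r)$, whence $u_1^{-1}(u_1(r)/K)\to u_1^{-1}(u_1(r))=r$.
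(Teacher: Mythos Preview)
Your plan is correct and structurally identical to the paper's: apply Lemma~\ref{lemAVVB1.45Thm} with $f=\mu$, $g=u_1$, $h=v_2$, $C=K$, check the sandwich $u_1<\mu<v_2$ from \eqref{bounds for mu-1}--\eqref{bounds for mu-2}, and reduce everything to the strict increase of $\mu/u_1$ and $v_2/\mu$. Your treatment of the domain $r_0$ and of the sharpness is in fact more explicit than the paper's.

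The only substantive difference is in how the monotonicity of $v_2(r)/\mu(r)$ is handled. You propose attacking it via logarithmic derivatives and the monotone l'H\^opital rule. The paper instead uses a Landen substitution: setting $u=2\sqrt{r}/(1+r)$ and using \eqref{two iden for mu} gives $\mu(u)=\tfrac12\mu(r)$, while a short computation with $u'=(1-r)/(1+r)$ yields $v_2(u)=\log(2/\sqrt{r})=\tfrac12\log(4/r)$, so $v_2(u)/\mu(u)=\log(4/r)/\mu(r)$, and the increase of this last ratio is a known result in \cite{avv}. For $\mu/u_1$ the paper simply cites \cite[Theorem~5.13(6)]{avv}. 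Thus the paper avoids any derivative computation by reducing, via the Landen identity, to off-the-shelf monotonicity statements; your direct approach would also work but requires carrying out the calculus you only sketch.
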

\begin{proof}
  It follows from \eqref{bounds for mu-1} and \eqref{bounds for mu-2} that $u_1(r)<\mu(r)<v_2(r)$ for all $r\in(0,1)$. It is enough to show that both $\mu(r)/u_1(r)$ and $v_2(r)/\mu(r)$ are defined and strictly increasing on $(0,1)$. By using the first identity of \eqref{two iden for mu} and letting $u=2\sqrt{r}/(1+r)$, we have
  \begin{equation*}
    \frac{v_2(r)}{\mu(r)}\quad {\rm incr.}\, \Leftrightarrow \frac{v_2(u)}{\mu(u)}\quad {\rm incr.}\, \Leftrightarrow \frac{\log(2/\sqrt{r})}{\mu(2\sqrt{r}/(1+r))}\quad {\rm incr.}\, \Leftrightarrow \frac{\frac{1}{2}\log(4/r)}{\frac{1}{2}\mu(r)}\quad {\rm incr.}
  \end{equation*}
  which is valid by \cite[Theorem 2.16(2)]{avv}. It follows also from \cite[Theorem 5.13(6)]{avv} that $\mu(r)/u_1(r)$ is strictly increasing from $(0,1)$ onto $(1,\infty)$. For illustration, see Figure \ref{Fig-muu1v2mu}. The proof is now complete.
\end{proof}
\begin{figure}[!ht]
\centering
\subfigure[]{
\includegraphics[width=5.5cm]{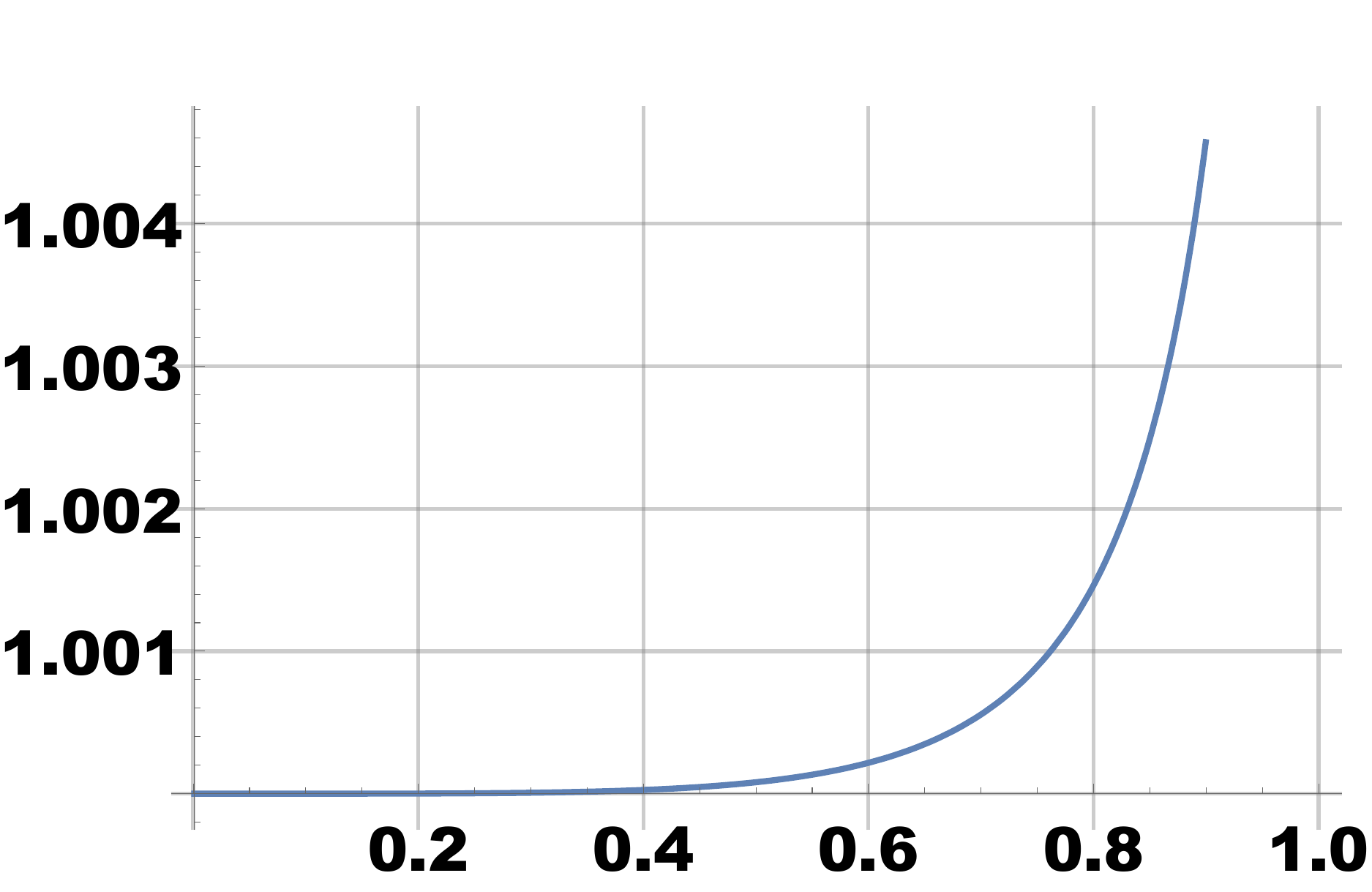}
    \label{fig:subfig1}
}
\hspace*{7mm}
\subfigure[]{
\includegraphics[width=5.5cm]{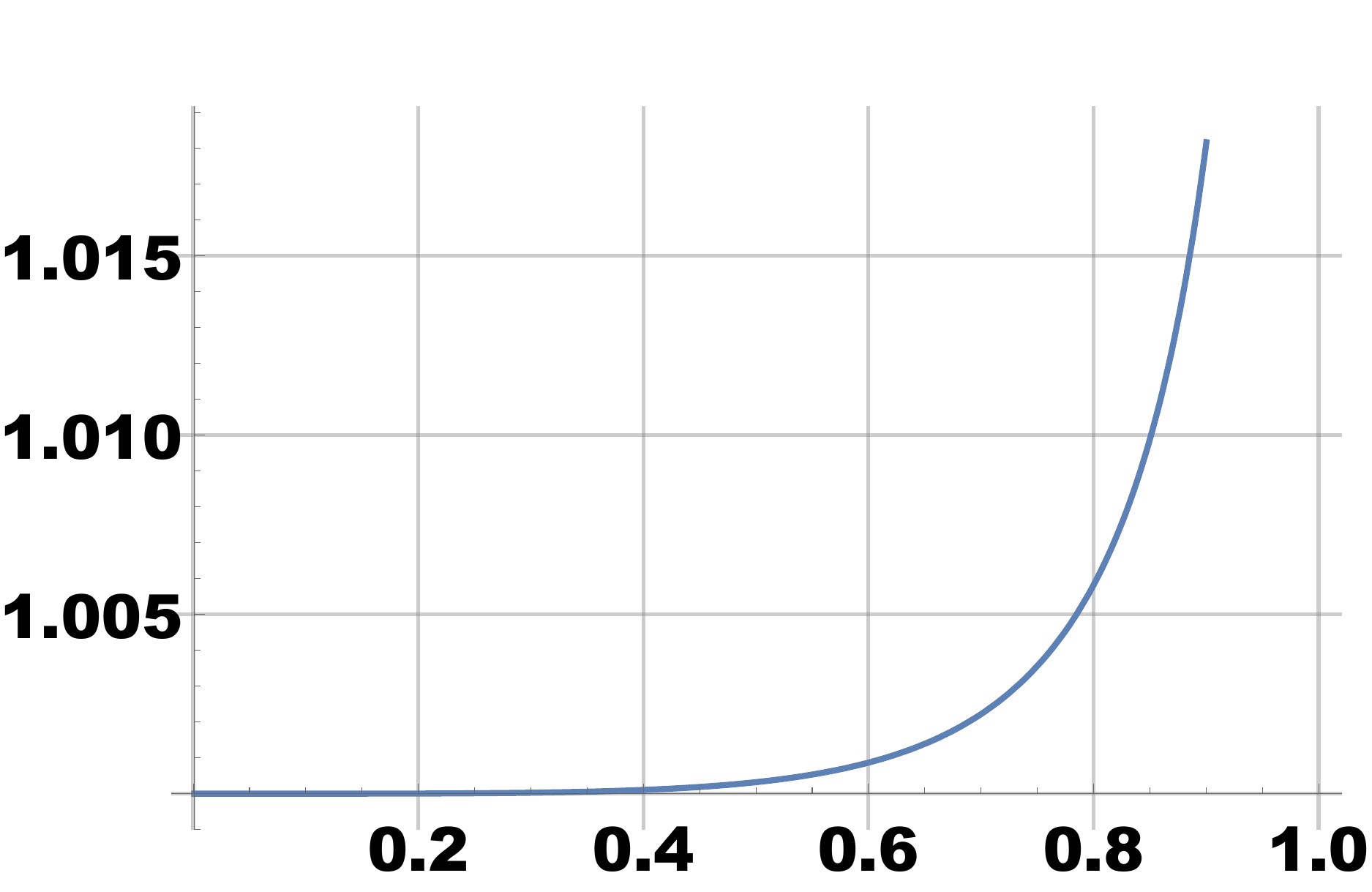}
    \label{fig:subfig2}
}
\caption[The graph of $\mu(r)/u_1(r)$ and $v_2(r)/\mu(r)$]
{\subref{fig:subfig1}: The graph of $\mu(r)/u_1(r)$, where $0<r<1$
 \subref{fig:subfig2}: The graph of $v_2(r)/\mu(r)$, where $0<r<1$.}
 \label{Fig-muu1v2mu}
\end{figure}
\begin{remark}
  It is worth mentioning that the new upper and lower bounds for $\varphi_K$ are more accurate than those bounds in \eqref{esti varphi}. Indeed,
    \begin{equation*}
    r^{1/K}<u_1^{-1}(u_1(r)/K)<\varphi_K(r)<v_2^{-1}(v_2(r)/K)<4^{1-1/K}r^{1/K}
  \end{equation*}
  for all $K>1$ and $r\in(0,1)$.
\end{remark}
\section{Landen Approximations}\label{sec3}

The three functions  $\mu, \mu^{-1}, \varphi_K$ were extensively investigated in
\cite{avv}, with computer implementations in languages, Mathematica, MATLAB, C
on the accompanying diskette. Here our goal is to show that for a large range
of the arguments we obtain results with accuracy to those in \cite{avv},
now only using Mathematica. The methods applied in \cite{avv} for the numerical
evaluation of $\K$ and $\mu$ were based on the arithmetic-geometric meanwhile for $\mu^{-1}$ and $\varphi_K$ a Newton iteration was used. Here we show that
the Landen sequences yield approximations with errors close to machine
epsilon agreement with the results of
\cite{avv} when the recursion level is moderate, 4 or 5.

Our starting point is the following lemma.

\begin{lemma}\label{muMonot} (1) The function $\mu(r)+ \log r$ is a monotone
decreasing function from $(0,1)$ onto $(0,\log 4).$

\noindent
(2) For $0<r<1, p \in \mathbb{Z}$
\begin{equation*}
2^{-p} \log \frac{1}{L(r,-p)}< \mu(r) <  2^{-p} \log \frac{4}{L(r,-p)},
\end{equation*}
and, in particular, for $p=1$ we have
\begin{equation*}
 u_5(r)<\mu(r)<v_2(r),
\end{equation*}
where $u_5$ and $v_2$ are defined as in Tables \ref{Tab Lo b} and \ref{Tab up b}, respectively.
\end{lemma}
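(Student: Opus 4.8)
The two parts have different flavors: part (2) is essentially a one-line consequence of the Landen doubling identity \eqref{mu(r)=2 p mu}, while the real work lies in part (1), in the monotonicity and the determination of the exact range. I would dispose of part (2) first and then concentrate on part (1).

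\emph{Part (2).} Replacing $p$ by $-p$ in \eqref{mu(r)=2 p mu} gives $\mu(r)=2^{-p}\mu(L(r,-p))$ for all $p\in\mathbb Z$. Since $L(r,-p)\in(0,1)$, I apply the outer inequalities of \eqref{bounds for mu-1}, namely $\log(1/s)<\mu(s)<\log(4/s)$, with $s=L(r,-p)$, and multiply through by $2^{-p}>0$; this is exactly the claimed two-sided estimate. For the case $p=1$ I substitute $L(r,-1)=r^2/(1+r')^2$ (read from \eqref{Landen} or Table \ref{LanFunc}) and pull the factor $\tfrac12$ inside the logarithm as a square root, which turns the lower bound into $\log\frac{1+r'}{r}=u_5(r)$ and the upper bound into $\log\frac{2(1+r')}{r}=v_2(r)$. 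This step is purely computational.

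\emph{Part (1), range.} Rewriting the outer inequalities of \eqref{bounds for mu-1} as $0<\mu(r)+\log r<\log 4$ shows at once that the function maps $(0,1)$ into $(0,\log 4)$. For the endpoints I would take limits: as $r\to1^-$ both $\mu(r)\to0$ (recall $\mu$ is a decreasing homeomorphism onto $(0,\infty)$) and $\log r\to0$, so the sum tends to $0$; as $r\to0^+$ the sharper inner bounds of \eqref{bounds for mu-1} squeeze $\mu(r)+\log r$ between $\log(1+\sqrt{r'})^2$ and $\log 2(1+r')$, both tending to $\log4$. Combined with the strict monotonicity proved below and continuity, this gives a homeomorphism onto $(0,\log4)$.

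\emph{Part (1), monotonicity, and the main obstacle.} Using the standard formula $\mu'(r)=-\pi^2/(4\,r\,(r')^2\,\K(r)^2)$ (see \cite{avv}), I compute
\[
\frac{d}{dr}\bigl(\mu(r)+\log r\bigr)=\frac1r\Bigl(1-\frac{\pi^2}{4\,(r')^2\,\K(r)^2}\Bigr),
\]
which is negative precisely when $r'\K(r)<\pi/2$. This inequality is the crux of the whole lemma. I would establish it by showing that $r\mapsto r'\K(r)$ is strictly decreasing: differentiating and using $\frac{d}{dr}\K(r)=\bigl(E(r)-(r')^2\K(r)\bigr)/\bigl(r\,(r')^2\bigr)$ together with $r^2+(r')^2=1$ collapses the expression to $\frac{d}{dr}\bigl(r'\K(r)\bigr)=\bigl(E(r)-\K(r)\bigr)/(r\,r')$, where $E$ denotes the complete elliptic integral of the second kind. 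Since $E(r)<\K(r)$ on $(0,1)$, this derivative is negative, and since $r'\K(r)\to\pi/2$ as $r\to0^+$ we conclude $r'\K(r)<\pi/2$ throughout $(0,1)$. The only genuinely nontrivial point is this inequality; everything else reduces either to the doubling identity or to a squeeze using \eqref{bounds for mu-1}.
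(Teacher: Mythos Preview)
Your proof is correct. For part~(2) you and the paper do the same thing: rewrite \eqref{mu(r)=2 p mu} as $\mu(r)=2^{-p}\mu(L(r,-p))$ and apply the bounds $\log(1/s)<\mu(s)<\log(4/s)$ to $s=L(r,-p)$; you cite these bounds from \eqref{bounds for mu-1}, the paper cites them from part~(1), which is the same inequality. The specialisation to $p=1$ is handled identically.

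For part~(1) the paper gives no argument at all and simply refers to \cite[p.~84, Thm~5.13]{avv} as well-known. Your route---computing $\frac{d}{dr}(\mu(r)+\log r)$ via the closed form for $\mu'(r)$, reducing the sign question to $r'\K(r)<\pi/2$, and establishing the latter from $\frac{d}{dr}(r'\K(r))=(E(r)-\K(r))/(r r')<0$ together with the limit $\pi/2$ at $r=0^+$---is a clean, self-contained proof that the paper does not supply. The squeeze argument you use for the endpoints of the range is also correct. So your treatment is strictly more informative than the paper's, at the cost of invoking the derivative formulas for $\K$ and $\mu$ and the comparison $E<\K$, all of which are standard in \cite{avv}.
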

\begin{proof} (1) This is well-known, see \cite[p. 84, Thm 5.13]{avv}.

\noindent
(2) The proof follows from \eqref{mu(r)=2 p mu} and part (1).
\end{proof}
The upper bound of Lemma \ref{muMonot}(2) with $p=4$ seems to be very precise. Figure \ref{Fig-mu-ub} shows the difference $\mu(r)-2^{-4}\log(4/L(r,-4))$, where $r\in(0,1)$ and $\mu(r)$ is computed using the {\tt cip.nb} file from \cite{avv}.
\begin{figure}
  \centering
  \includegraphics[width=3in]{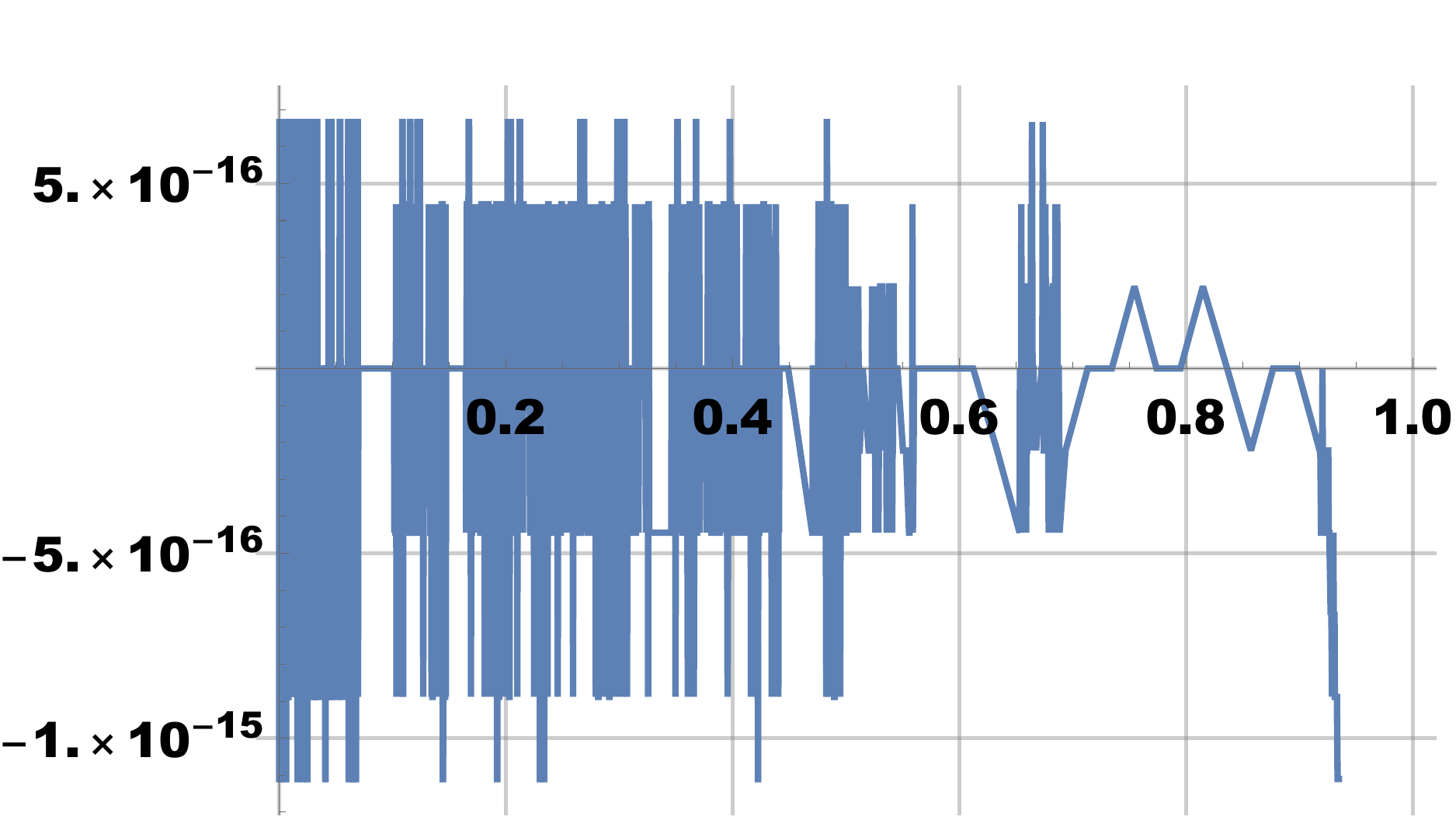}
  \caption{The difference between $\mu(r)$ and the upper bound of  Lemma \ref{muMonot}(2) with $p=4$ and $r\in(0,1)$.}\label{Fig-mu-ub}
\end{figure}
\begin{proposition}\label{prop lim for 2 p u}
  Let $u,v:(0,1)\rightarrow(0,\infty)$ be continuous functions with $|u(r)-v(r)|<M$ for some constant $M$ and for all $r\in(0,1)$. Also let $u(r)<\mu(r)<v(r)$ for all $r\in(0,1)$. Then
\begin{equation*}
  \lim_{p\rightarrow -\infty}2^p u(L(r,p))=\lim_{p\rightarrow -\infty}2^p v(L(r,p))=\mu(r).
\end{equation*}
\end{proposition}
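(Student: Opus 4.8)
The plan is to exploit the exact scaling identity \eqref{mu(r)=2 p mu}, namely $\mu(r)=2^p\mu(L(r,p))$ for every $p\in\mathbb{Z}$. The key point is that this identity makes the quantity $2^p\mu(L(r,p))$ independent of $p$ and identically equal to $\mu(r)$, so the whole statement collapses to a sandwich (squeeze) estimate rather than any delicate limiting analysis.

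First I would fix $r\in(0,1)$ and apply the hypothesis $u(s)<\mu(s)<v(s)$ at the point $s=L(r,p)\in(0,1)$, obtaining
\[
u(L(r,p))<\mu(L(r,p))<v(L(r,p)).
\]
Multiplying through by the positive factor $2^p$ and substituting $2^p\mu(L(r,p))=\mu(r)$ from \eqref{mu(r)=2 p mu}, this gives, for every $p$,
\[
2^p u(L(r,p))<\mu(r)<2^p v(L(r,p)).
\]

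Next I would control the gap between the two outer terms using the uniform bound $|u-v|<M$. Evaluating it at $L(r,p)$ and multiplying by $2^p$ yields
\[
0<2^p v(L(r,p))-2^p u(L(r,p))<2^p M.
\]
Since $\mu(r)$ lies strictly between $2^p u(L(r,p))$ and $2^p v(L(r,p))$, the two nonnegative quantities $\mu(r)-2^p u(L(r,p))$ and $2^p v(L(r,p))-\mu(r)$ are each dominated by $2^p M$. As $p\to-\infty$ we have $2^p M\to 0$, so a single application of the squeeze theorem delivers both limits at once.

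I do not expect a genuine obstacle here: identity \eqref{mu(r)=2 p mu} does essentially all the work, and it is worth noting that continuity of $u$ and $v$ is not actually needed in the argument — only the inequalities $u<\mu<v$ and $|u-v|<M$ enter. The one point to record carefully is that the dominating bound $2^p M$ is the same for both sequences, which is exactly what lets the estimate treat the upper and lower Landen bounds simultaneously.
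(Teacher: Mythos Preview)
Your proposal is correct and follows essentially the same route as the paper: use the identity $\mu(r)=2^p\mu(L(r,p))$ to sandwich $\mu(r)$ between $2^p u(L(r,p))$ and $2^p v(L(r,p))$, then bound the gap by $2^p M\to 0$ as $p\to-\infty$. Your extra remark that continuity of $u,v$ is never actually used is accurate and worth keeping.
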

\begin{proof}
  By \eqref{mu(r)=2 p mu}
\begin{equation*}
  2^p u(L(r,p))< \mu(r)=2^p\mu(L(r,p))< 2^p v(L(r,p))
\end{equation*}
which implies that
\begin{equation*}
  |2^p v(L(r,p))-2^p u(L(r,p))|<2^p M,
\end{equation*}
concluding the proof.
\end{proof}

\begin{remark}
  By \eqref{bounds for mu-1} we can apply Proposition \ref{prop lim for 2 p u}, for example, with $u_2$ and $v_2$, where $u_2$ and $v_2$ are defined as in Tables \ref{Tab Lo b} and \ref{Tab up b}, respectively.
\end{remark}
Below, we will find the best and simplest approximation for $\mu^{-1}(y)$, based on the Landen transformation.
\begin{proposition}\label{prop ineq for L}
    Let $u,v:(0,1)\rightarrow(0,\infty)$ be decreasing homeomorphism with $u(r)<\mu(r)<v(r)$ for all $r\in(0,1)$. Then for $y>0$ and $r=\mu^{-1}(y)$ we have
\begin{equation*}
  L(u^{-1}(2^{-p}y),-p)<r=\mu^{-1}(y)<L(v^{-1}(2^{-p}y),-p).
\end{equation*}
\end{proposition}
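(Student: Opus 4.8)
The plan is to reduce the whole statement to a single application of the scaling identity \eqref{mu(r)=2 p mu}, combined with the monotonicity of the maps involved, so that no genuine estimate has to be carried out. First I would fix $y>0$, set $r=\mu^{-1}(y)$, and introduce the auxiliary point $s=L(r,p)$. Reading \eqref{mu(r)=2 p mu} in the form $\mu(L(r,p))=2^{-p}\mu(r)$ gives $\mu(s)=2^{-p}y$. Thus $2^{-p}y$ lies in the range of $\mu$ at the point $s$, and the hypothesis $u(s)<\mu(s)<v(s)$ becomes
\begin{equation*}
  u(s)<2^{-p}y<v(s).
\end{equation*}

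Next I would invert these two inequalities. Since $u$ and $v$ are decreasing homeomorphisms of $(0,1)$ onto $(0,\infty)$, their inverses $u^{-1},v^{-1}\colon(0,\infty)\to(0,1)$ are decreasing and everywhere defined, so $u^{-1}(2^{-p}y)$ and $v^{-1}(2^{-p}y)$ automatically lie in $(0,1)$ and no domain restriction is needed. Applying the decreasing map $u^{-1}$ to $u(s)<2^{-p}y$ reverses the inequality and yields $s>u^{-1}(2^{-p}y)$, while applying $v^{-1}$ to $2^{-p}y<v(s)$ yields $s<v^{-1}(2^{-p}y)$. Hence
\begin{equation*}
  u^{-1}(2^{-p}y)<s=L(r,p)<v^{-1}(2^{-p}y).
\end{equation*}

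Finally I would undo the Landen step. Because $L(\cdot,-p)$ is the inverse of the increasing homeomorphism $L(\cdot,p)$, it is itself an increasing homeomorphism of $(0,1)$ and satisfies $L(s,-p)=r$. Applying it to the displayed chain preserves the order and produces exactly
\begin{equation*}
  L(u^{-1}(2^{-p}y),-p)<r<L(v^{-1}(2^{-p}y),-p),
\end{equation*}
which is the claim. The only point that requires care — and the place where a sign error could silently creep in — is the bookkeeping of inequality directions: the maps $\mu$, $u$, $v$ all reverse order, whereas $L(\cdot,-p)$ preserves it, and one must simultaneously confirm that each intermediate quantity stays in the domain $(0,1)$ where the Landen functions act. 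Since both of these points are settled by the homeomorphism properties already recorded in Section~\ref{sec2}, I expect the argument to amount to a short chain of monotone substitutions rather than any computation.
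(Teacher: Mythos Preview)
Your argument is correct and essentially identical to the paper's own proof: both use the identity $\mu(L(r,p))=2^{-p}\mu(r)$ to write $u(L(r,p))<2^{-p}y<v(L(r,p))$, invert the decreasing maps $u,v$, and then apply the increasing homeomorphism $L(\cdot,-p)$. The only difference is cosmetic --- the paper proves the upper bound and says the lower bound is similar, whereas you handle both at once via the auxiliary point $s=L(r,p)$.
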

\begin{proof}
  By \eqref{mu(r)=2 p mu}
\begin{equation*}
  y=\mu(r)=2^p \mu(L(r,p))<2^p v(L(r,p))
\end{equation*}
and because $v$ is decreasing $L(r,p)<v^{-1}(2^{-p}y)$. Hence $r < L(v^{-1}(2^{-p}y),-p)$. The proof of the lower bound is similar, so we omit the details.
\end{proof}
\begin{remark}\label{rem application}
We know by \eqref{bounds for mu-2} that
\begin{equation*}
  u_1(r)<\mu(r)<v_1(r)
\end{equation*}
for all $r\in(0,1)$, where $u_1$ and $v_1$ are defined as in Tables \ref{Tab Lo b} and \ref{Tab up b}, respectively.
It is easy to see that $u_1$ and $v_1$ are a homeomorphism of $(0,1)$ onto $(0,\infty)$.
Applying Proposition \ref{prop ineq for L} with $u_1(r)$, $v_1(r)$, and their inverses we obtain
\begin{equation*}
\mu^{-1}(y)\approx L\left(u_1^{-1}(2^{-p}y),-p\right)=:f_1(y,p)
\end{equation*}
and
\begin{equation*}
\mu^{-1}(y)\approx L\left(v_1^{-1}(2^{-p}y),-p\right)=:g_1(y,p).
\end{equation*}
It also follows from \eqref{bounds for mu-1} that the following inequalities
\begin{equation*}
  u_3(r)<\mu(r)<v_2(r),
\end{equation*}
hold true for $r\in(0,1)$, where $u_3$ and $v_2$ are a homeomorphism of $(0,1)$ onto $(0,\infty)$ and $(\log 2,\infty)$, respectively.
If we apply Proposition \ref{prop ineq for L} for $u_3$ and $v_2$, we get
\begin{equation*}
\mu^{-1}(y)\approx L\left(u_3^{-1}(2^{-p}y),-p\right)=:f_2(y,p)
\end{equation*}
and
\begin{equation*}
\mu^{-1}(y)\approx L\left(v_2^{-1}(2^{-p}y),-p\right)=:g_2(y,p).
\end{equation*}
Finally, applying
\begin{equation*}
  u_4(r)<\mu(r)<v_3(r)
\end{equation*}
and applying Proposition \ref{prop ineq for L} with $u_4$ and $v_3$ (which are a homeomorphism of $(0,1)$ onto $(0,\infty)$ and $(\log4,\infty)$, respectively) we obtain
\begin{equation*}
  \mu^{-1}(y)\approx L\left(u_4^{-1}(2^{-p}y),-p\right)=:f_3(y,p)
\end{equation*}
and
\begin{equation*}
  \mu^{-1}(y)\approx L\left(v_3^{-1}(2^{-p}y),-p\right)=:g_3(y,p).
\end{equation*}
Computational results for some values $y$ in the range $(0.2, 20)$ are summarized in Table \ref{Tab4}. Only the cases $p=-4,-5$ are taken into account in this table. When $p=\ldots,-6,-3,-2,-1,\ldots$ the error is large.
\begin{table}[H]
\centering
\begin{tabular}{|c|c|c|}
    \hline
 $y$ & $\mu^{-1}(y)-g_2(y,-4)$ &  $\mu^{-1}(y)-g_3(y,-5)$ \\
  \hline
 $0.5$ &  $0$ &  $0$\\
 \hline
 $1.5$ &  $-2.22045\times10^{-16}$ &  $-2.22045\times10^{-16}$\\
   \hline
 $2.5$ &  $1.11022\times10^{-16}$ &  $1.11022\times10^{-16}$\\
   \hline
    $3.5$ &  $-1.38778\times10^{-17}$ &  $-1.38778\times10^{-17}$\\
   \hline
    $4.5$ &  $2.08167\times10^{-17}$ &  $2.08167\times10^{-17}$\\
   \hline
    $5.5$ &  $0$ &  $0$\\
   \hline
    $6.5$ &  $0$ &  $0$\\
   \hline
    $7.5$ &  $-1.30104\times10^{-18}$ &  $-1.30104\times10^{-18}$\\
   \hline
    $8.5$ &  $-4.33681\times10^{-19}$ &  $-4.33681\times10^{-19}$\\
   \hline
       $9.5$ &  $-7.58942\times10^{-19}$ &  $-7.58942\times10^{-19}$\\
   \hline
       $10.5$ &  $-2.1684\times10^{-19}$ &  $-2.1684\times10^{-19}$\\
   \hline
       $11.5$ &  $3.38813\times10^{-20}$ &  $3.38813\times10^{-20}$\\
   \hline
       $12.5$ &  $-1.18585\times10^{-20}$ &  $-1.18585\times10^{-20}$\\
   \hline
       $13.5$ &  $-3.38813\times10^{-21}$ &  $-3.38813\times10^{-21}$\\
   \hline
       $14.5$ &  $-4.23516\times10^{-22}$ &  $0$\\
   \hline
       $15.5$ &  $-1.90582\times10^{-21}$ &  $-1.90582\times10^{-21}$\\
   \hline
       $16.5$ &  $-5.82335\times10^{-22}$ &  $-5.82335\times10^{-22}$\\
   \hline
          $17.5$ &  $2.24993\times10^{-22}$ &  $2.24993\times10^{-22}$\\
   \hline
          $18.5$ &  $7.27919\times10^{-23}$ &  $7.27919\times10^{-23}$\\
   \hline
          $19.5$ &  $1.65436\times10^{-23}$ &  $1.65436\times10^{-23}$\\
   \hline
 \end{tabular}
\vspace{0.3cm}
\caption{The error between $\mu^{-1}(y)$ and $g_2(y,-4)$, and $\mu^{-1}(y)$ and $g_3(y,-5)$ for some values in range $y\in(0.2,20)$. For the computation of $\mu^{-1}(y)$ we have used the Mathematica "cip.m" file from \cite[Appendix B]{avv}.}
\label{Tab4}
\end{table}
Computer experiments show that $g_2(y,-4)$ and $g_3(y,-5)$ are the best approximations for $\mu^{-1}(y)$. We note that $\mu^{-1}(y)-g_2(y,-4)$ and $\mu^{-1}(y)-g_3(y,-5)$ have an error value of order $10^{-14},\ldots,10^{-24}$ in the interval $(0.2,20)$, see Figure \ref{Fig-InvMu-g23}. For the computation of $\mu^{-1}(y)$ we use the Mathematica "cip.m" file from \cite[Appendix B]{avv}.
\begin{figure}[!ht]
\centering
\subfigure[]{
\includegraphics[width=5.5cm]{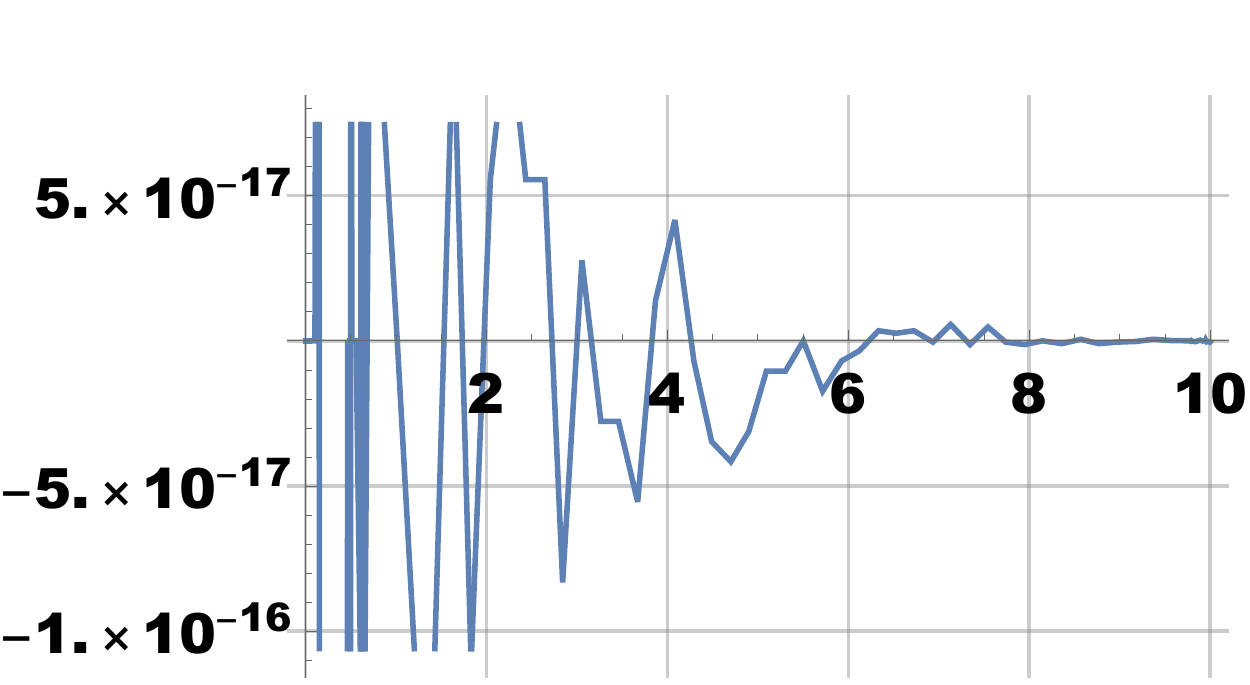}
    \label{fig:subfigInvMu-g2}
}
\hspace*{7mm}
\subfigure[]{
\includegraphics[width=5.5cm]{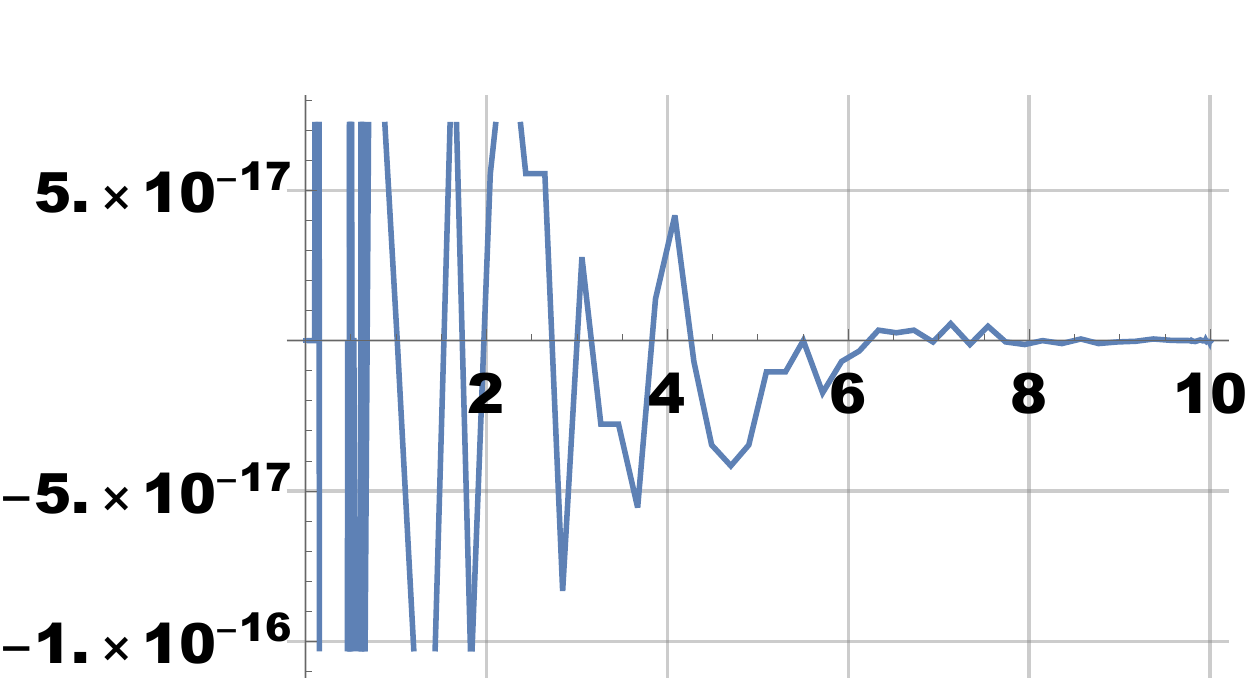}
    \label{fig:subfigInvMu-g3}
}
\caption[The graph of error between $\mu^{-1}(y)$ and $g_{2,3}$]
{\subref{fig:subfig1}: The graph of $\mu^{-1}(y)-g_2(y,-4)$, where $0<y<10$
 \subref{fig:subfig2}: The graph of $\mu^{-1}(y)-g_3(y,-5)$, where $0<y<10$.}
 \label{Fig-InvMu-g23}
\end{figure}
\end{remark}
\begin{table}[H]
\centering
\begin{tabular}{|c|c|c|}
    \hline
 $p$ &   $\mu^{-1}(y)$ & $\mu(r)$ \\
   \hline
 $1$ & $\displaystyle{\frac{4\sqrt{\exp(-\max\{2r,\log4\})}}{1+4\exp(-\max\{2r,\log4\})}}$ &
  $\displaystyle{\log \frac{2(1+r')}{r}}$
 \\
\hline
$2$ & $\displaystyle{\frac{4\sqrt{\frac{\sqrt{\exp(-\max\{4r,\log4\})}}{1+4\exp(-\max\{4r,\log4\})}}}
{1+\frac{4\sqrt{\exp(-\max\{4r,\log4\})}}{1+4\exp(-\max\{4r,\log4\})}}}$
&$\displaystyle{\frac{1}{4} \log\frac{4\left(1+r'\right)^4
\left(1+\sqrt{w}\right)^2}{r^4}}$\\
\hline
\end{tabular}
\vspace{0.3cm}
\caption{Two steps of Landen approximations of $\mu^{-1}$ by $g_3(y,-p))$, and $\mu$ by Lemma \ref{muMonot}(2).}
\label{PhiFunc}
\end{table}

\begin{nonsec}{\bf The special function $\varphi_K(r)$.}\label{varphi fun sec}
In the study of H\"{o}lder continuity of quasiconformal mappings of the plane, the special function $\varphi_K(r)$ defined as \eqref{varphi}
has an important role.
Based on Proposition \ref{prop lim for 2 p u} and Remark  \ref{rem application} we study the approximation \cite[Theorem 5.43]{avv}
\begin{equation}\label{varphi approx}
  \varphi_K(r)=L(\varphi_K(L(r,-p)),p)\approx L\left(4^{1-1/K} L(r,-p)^{1/K},p\right)
  =:L_\varphi(K,r,p)
\end{equation}
for various values of $p$ with $4^{1-1/K} L(r,-p)^{1/K}<1$. Table \ref{PhiApp} shows a structural formula for $L_\varphi(K,r,p)$, where $p=0,1$.
\begin{table}[H]
\centering
\begin{tabular}{|c|c|c|}
    \hline
 $p$ &   $L_\varphi(K,r,p)$&  c\\
   \hline
   $0$ & $4^{1-1/K} r^{1/K}$ & $-$\\
   \hline
     $1$ & ${\displaystyle{\frac{2\sqrt{4^{1-1/K}\,\, c^{1/K}}}{1+4^{1-1/K}\,\, c^{1/K}}}}$ & $\displaystyle{\left(\frac{r}{1+\sqrt{1-r^2}}\right)^2}$\\
\hline
\end{tabular}
\vspace{0.3cm}
\caption{The function $L_\varphi(K,r,p)$ for $p=0,1$.}
\label{PhiApp}
\end{table}
Here we note that $L_\varphi(K,r,p)$ is a majorant for the function $\varphi_K(r)$ when $4^{1-1/K} L(r,-p)^{1/K}<1$.
We also study the following approximation by applying Remark \ref{rem application} and Lemma \ref{muMonot}(2)
\begin{equation}\label{varphi approx 2}
     \varphi_K(r)\approx g_3\left(2^{-p}\log(4/L(r,-p))/K,-5\right)=:LM(K,r,p),
\end{equation}
where $K>1$, and $r\in(0,1)$. Computer experiments show that $LM(K,r,5)$ is the best approximation for $\varphi_K(r)$, see Figure \ref{Fig-LM3D}.
\begin{figure}
  \centering
  \includegraphics[width=10cm]{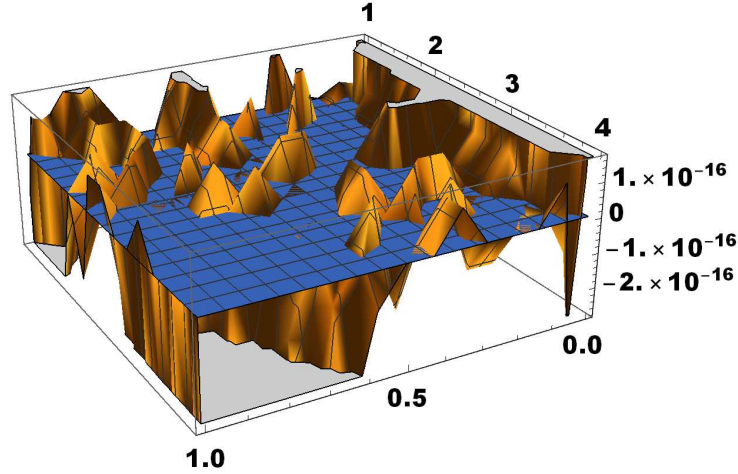}
  \caption{The 3D plot of $\varphi_K(r)-LM(K,r,5)$ for $1<K<4$ and $0<r<1$.}\label{Fig-LM3D}
\end{figure}
\end{nonsec}
\begin{nonsec}{\bf Remark.}\label{rmkLPhi} The next few lines of Mathematica code
\begin{verbatim}
L[s_, p_] :=  Module[{j = 0, y = s},
   While[((j < Abs[p])), If[p < 0, y = (y / (1 + Sqrt[1 - y^2]))^2,
     y = 2 * Sqrt[y] / (1 + y) ]; j++]; y];
LPhi[K_, r_, p_] := L[4 * Exp[(1 / K) * Log[L[r, -p] / 4]], p];
\end{verbatim}
define an approximation for $\varphi_K$. This function satisfies
\begin{equation}\label{basic}
 4^{1 - 1/K} r^{1/K} > {\tt LPhi[K, r, 1]}
 \end{equation}
as we see using the command
\begin{verbatim}
 Plot3D[{0, 4^(1 - 1/K) r^(1/K) - LPhi[K, r, 1]}, {r, 0, 1}, {K, 1, 3}]
 \end{verbatim}
The LHS function of \eqref{basic} here is a majorant for  $\varphi_K, K>1,$ i.e.
  $\varphi_K(r) < 4^{1 - 1/K} r^{1/K}$ for $ K>1, r \in (0,1)$ by
  \cite[Thm 9.32]{hkv}.

The RHS function {\tt LPhi[K, r, 1]} of  \eqref{basic} is not well-defined, e.g., for $K=2$ and $r=0.9$, because
\begin{equation*}
    4^{1-1/K}L(0.9,-1)^{1/2}=1.25358>1.
\end{equation*}
\end{nonsec}

\begin{nonsec}{\bf Conclusion.}\label{canclusion}
For $K\in(1,20)$ and $r\in(0,1)$ the approximations \eqref{varphi approx} and
\eqref{varphi approx 2} with $p=5$ yield maximal error of the order $10^{-14}$.
The reported error is based on the identity \eqref{phiPyth}.
The approximation \eqref{varphi approx} based only on
the Landen transformation is remarkably simple and precise,
as it makes no use of elliptic integrals.
One could also use this identity \eqref{varphi2} to test the above algorithm.
\end{nonsec}

\begin{nonsec}{\bf Some open problems.}\label{open p}
Computational experiments have led us to formulate the following questions:\\
{\bf (1)} Let $L_\varphi(K,r,p)$ be defined as in \eqref{varphi approx}. Then
\begin{equation*}
  L_\varphi(K,r,p)\leq 4^{1-{1}/{K}} r^{{1}/{K}}
\end{equation*}
for $1<K<4.6$, $r\in(0,0.7]$, and $p=0,1,2,\ldots$.

\noindent
{\bf Motivation.} Considering that $p=0$ is obvious, we may assume that $p=1,2,3,\ldots$.
Since $L(\cdot, p):(0, 1) \rightarrow (0, 1)$ is an increasing homeomorphism, we are looking for $r\in(0,1)$ and $K>1$ such that $4^{1-1/K} L(r,-p)^{1/K}<1$. Computer experiments show that $4^{1-1/K} L(r,-p)^{1/K}<1$ holds true for all $r\in(0,0.7]$, $1<K<4.6$, and $p=1,2,\ldots$.

\noindent
{\bf (2)}  Remark \ref{rmkLPhi} only deals with the case $p=1$. What about $p=2$?
Can we find some pair of functions $u,v$ where $u$ is a minorant of $\mu$ and
$v$ a majorant of $\mu$ such that the corresponding $L_\varphi(K,r,1)$ would be
a majorant of $\varphi_K$?
\end{nonsec}
\noindent
{\bf Funding.}
The first author received financial support provided by the Doctoral Programme (formerly MATTI and now EXACTUS) of the Department of Mathematics and Statistics of the University of Turku. The research of the second author was supported by the Finnish Cultural Foundation.

\vspace{1cm}
\noindent
{\bf Acknowledgment.}
The third author had an opportunity to work at the Institute of Mathematics in Novosibirsk for the period March-May 1981 as a guest of Prof. Yu. G. Reshetnyak, a pioneer of the theory of quasiregular mappings, Bull. Amer. Math. Soc. (N.S.) 24 (1991), no. 2, 408--415.
The third author is happy to acknowledge the strong influence of Prof. Reshetnyak's work on his whole research career.

\vspace{1cm}
\noindent
{\bf Orcid}\\
\noindent
{\tt Rahim Kargar ORCID ID: \url{http://orcid.org/0000-0003-1029-5386}}\\
{\tt Oona Rainio ORCID ID: \url{http://orcid.org/0000-0002-7775-7656}}\\
{\tt Matti Vuorinen ORCID ID: \url{http://orcid.org/0000-0002-1734-8228}}




\def\cprime{$'$} \def\cprime{$'$} \def\cprime{$'$}
\providecommand{\bysame}{\leavevmode\hbox to3em{\hrulefill}\thinspace}
\providecommand{\MR}{\relax\ifhmode\unskip\space\fi MR }
\providecommand{\MRhref}[2]{%
  \href{http://www.ams.org/mathscinet-getitem?mr=#1}{#2}
}
\providecommand{\href}[2]{#2}


\begin{thebibliography}{99}

\bibitem{as} Md. S. Alam and T.  Sugawa, {\it Geometric deduction of the solutions to modular equations} (English summary), Ramanujan J. {\bf 59} (2022), 459--477.


\bibitem{aqvv} G.D. Anderson, S.-L. Qiu, M.K. Vamanamurthy and
M. Vuorinen,  {\it Generalized elliptic integrals and modular equations},
Pacific J. Math. {\bf192} (2000), 1--37.

\bibitem{avv0} G.D. Anderson, M.K. Vamanamurthy and
M. Vuorinen, {\it Distortion functions for plane quasiconformal mappings},
Israel J. Math. {\bf62} (1988), 1--16.

\bibitem{avv} G.D. Anderson, M.K. Vamanamurthy and
M. Vuorinen, {\it Conformal invariants, inequalities and quasiconformal maps}, John Wiley \& Sons, Inc., New York, 1997.
\bibitem{avz} G.D. Anderson, M. Vuorinen and X. Zhang, {\it Topics in special
functions III. Analytic number theory, approximation theory,
and special functions}, 297--345, Springer, New York, 2014.

\bibitem{borw} J.W. Borwein and P.B. Borwein,
{\it Pi and the AGM}, Canadian Mathematical Society Series of Monographs and Advanced Texts, 4. John Wiley \& Sons, Inc., New York, 1998. A study in analytic number theory and computational complexity, Reprint of the 1987 original, A Wiley-Interscience Publication.


\bibitem{bb93} J.W. Borwein and P.B. Borwein,
{\it Inequalities for compound mean iterations with logarithmic asymptotes},  J. Math. Anal. Appl. {\bf177} (1993), 572--582.

\bibitem{hkv} P. Hariri, R. Kl\'en and M. Vuorinen,
{\it Conformally Invariant Metrics  and Quasiconformal Mappings},
 Springer Monographs in Mathematics, Springer, Berlin, 2020.

\bibitem{qmc} S.-L. Qiu, X.-Y. Ma, Y.-M. Chu,
{\it Transformation properties of hypergeometric functions and their applications} (English summary), Comput. Methods Funct. Theory {\bf 22} (2022), 323--366.

\bibitem{par} D. Partyka, {\it Approximation of the Hersch-Pfluger distortion function},
Ann. Acad. Sci. Fenn. Ser. A I Math. {\bf18} (1993), 343--354.
\bibitem{R} R. Roy,  {\it Elliptic and modular functions from Gauss to Dedekind to Hecke}, Cambridge University Press, Cambridge, 2017.





\end{thebibliography}
\end{document}